\documentclass[11pt,reqno]{amsart}

\usepackage[T1]{fontenc}
\usepackage{lmodern}
\usepackage{amsmath,amssymb,mathtools}
\usepackage{amsthm}
\usepackage{microtype}
\usepackage{csquotes}
\usepackage[style=numeric,sorting=nyt,giveninits=true,maxbibnames=99]{biblatex}
\usepackage[hidelinks]{hyperref}

\numberwithin{equation}{section}

\theoremstyle{plain}
\newtheorem{theorem}{Theorem}[section]
\newtheorem{proposition}[theorem]{Proposition}
\newtheorem{lemma}[theorem]{Lemma}
\newtheorem{corollary}[theorem]{Corollary}
\newtheorem{conjecture}[theorem]{Conjecture}
\theoremstyle{definition}

\theoremstyle{remark}
\newtheorem{remark}[theorem]{Remark}

\newtheorem{openproblem}[theorem]{Open Problem}

\newcommand{\N}{\mathbb N}
\newcommand{\Z}{\mathbb Z}

\title[DIVISIBILITY OF SHIFTED SUM-OF-DIVISORS VALUES]{On the Divisibility Relation $\sigma(n)\mid\sigma(n+h)$ and a Generalized Erd\H{o}s--Sierpi\'nski Conjecture}
\author{Amirali Fatehizadeh}
\address{Faculty of Mathematical Sciences, Shahid Beheshti University, Tehran, Iran}
\email{Amirali.fatehizadeh@gmail.com}
\email{A.fatehizadeh@mail.sbu.ac.ir}

\author{Florian Luca}
\address{Mathematics Division, Stellenbosch University, Stellenbosch, South Africa}
\address{Department of Computer Science, University of Oxford, Oxford, United Kingdom}
\email{fluca@sun.ac.za}
\date{}

\keywords{sum-of-divisors function; fixed-shift divisibility; abundancy index; Erd\H{o}s--Sierpi\'nski conjecture}
\subjclass[2020]{Primary 11N37; Secondary 11K65}

\begin{document}
\begin{abstract}
For each fixed positive integer $h$, we study the divisibility relation $\sigma(n)\mid\sigma(n+h)$. We isolate an explicit regular family arising from integral quotients of shifted abundancy indices and show that the complementary set satisfies a subexponential saving; in particular, the number of solutions up to $x$ is $O_h(x/(\log x)^2)$. We also study the proportionality equation $\sigma(n+h)=\lambda\sigma(n)$. For every fixed nonzero integer $h$, uniformly for all real $\lambda>0$, the number of solutions up to $x$ is $O(x/\sqrt{\log\log\log x})$, with an absolute implied constant once $x$ exceeds an $h$-dependent threshold. Finally, we give an explicit family which, under Schinzel's Hypothesis $H$, produces infinitely many solutions of $\sigma(n+1)=2\sigma(n)$; the Bateman--Horn conjecture yields a precise asymptotic for the number of members of this family up to $x$. We conjecture that $\sigma(n+h)=k\sigma(n)$ has infinitely many positive integer solutions for every fixed $h,k\ge1$.
\end{abstract}

\maketitle
\enlargethispage{2pt}

\section{Introduction}\label{sec:introduction}

Let $\sigma(n)=\sum_{d\mid n}d$ denote the sum-of-divisors function.
If $n=\prod_p p^{a_p}$, then
$\sigma(n)=\prod_p(1+p+\cdots+p^{a_p})$, so the value of $\sigma(n)$
is determined directly by the prime-power factorization of $n$.
This multiplicative description gives little direct control, however,
when one compares values at additively related arguments. For a fixed
nonzero shift $h$, the factorizations of $n$ and $n+h$ are related only
through the additive constraint between the two integers. Consequently,
relations between $\sigma(n)$ and $\sigma(n+h)$ combine the multiplicative
structure of $\sigma$ with an additive constraint. Equal values, fixed
ratios, and divisibility between these shifted values are therefore natural
problems in the joint value distribution of $\sigma$.

A classical starting point is the problem of consecutive equal values. Sierpi\'nski asked whether
\[
\sigma(n+1)=\sigma(n)
\]
holds for infinitely many positive integers $n$; Guy and Shanks also record an earlier conjecture of Erd\H{o}s asserting the same infinitude \cite{GuyShanks1974}. Numerical evidence preceded any general analytic estimate. M\k{a}kowski listed the nine solutions below $10^4$; his original paper was followed by a correction in 1961 \cite{Makowski1960,MakowskiCorr1961}. Hunsucker, Nebb and Stearns subsequently extended the computation to $10^7$, obtaining $113$ solutions in that range \cite{HunsuckerNebbStearns1973}. Guy and Shanks then gave an explicit large solution by exploiting a prime-factor pattern visible among smaller examples. Their construction recovered several earlier solutions from the same pattern, although they observed that it did not appear likely, by itself, to yield infinitely many solutions \cite{GuyShanks1974}. The corresponding sequence of solutions is OEIS A002961
\cite{OEIS2026}. Thus the early history already exhibited two complementary features of the problem: substantial computational evidence and rigid multiplicative patterns among individual solutions.
Later computations extended the numerical record substantially. Benito found $1268$ solutions for $n\le 1.5\cdot 10^{10}$ \cite{Benito2007}, while Bayless and Kinlaw report the Noe--Resta computation of $10135$ solutions through $10^{13}$ and also obtained explicit upper and lower bounds for the reciprocal sum of the consecutive solution set \cite{BaylessKinlaw2015}.

The consecutive question was also placed in a fixed-shift setting. Mientka and Vogt asked for which integers $h\ge1$ the equation
\[
\sigma(n+h)=\sigma(n)
\]
has infinitely many solutions \cite{MientkaVogt1970}. Hunsucker, Nebb and Stearns subsequently found at least two solutions for every shift $h\le5000$ \cite{HunsuckerNebbStearns1973,Weingartner2011}. The consecutive problem is therefore part of a wider family of additive-shift questions in which the existence and arithmetic form of solutions may depend on the shift. This fixed-shift perspective is fundamental below: the divisibility results of Section~\ref{sec:divisibility} are formulated for every fixed positive $h$, whereas the uniform proportionality theorem of
Section~\ref{sec:levelsets} applies to every fixed nonzero integer shift.

A different direction emerged from quantitative analytic estimates. Erd\H{o}s, Pomerance and S\'ark\H{o}zy proved that
\[
\#\{n\le x:\sigma(n+1)=\sigma(n)\}
\ll
x\exp\!\bigl(-(\log x)^{1/3}\bigr).
\]
They nevertheless conjectured that, for every $\varepsilon>0$, the number of solutions up to $x$ should eventually exceed $x^{1-\varepsilon}$ \cite{ErdosPomeranceSarkozy1987}. Thus a strong sparsity estimate remains compatible with the expected infinitude of the solution set. Their work shifted the emphasis from computation and isolated constructions toward analytic control of the entire set of solutions, a distinction that remains relevant in later fixed-shift problems.

Further work revealed a substantial structured component in shifted equalities. Yamada studied equations
\[
\sigma(a_1n+b_1)=\sigma(a_2n+b_2)
\]
for affine forms, constructed families arising from fixed arithmetic cores and simultaneous primality conditions, and obtained a subexponential upper bound for the solutions not generated by those families \cite{Yamada2017}. In the parallel shifted-equality case, equal-abundancy relations are central to these structured families. Weingartner, approaching the existence problem from a different direction, proved under Schinzel's Hypothesis $H$ that $\sigma(n+h)=\sigma(n)$ has infinitely many solutions for every fixed positive even $h$ \cite{Weingartner2011}. Equal-abundancy relations also play a central role in Ford's later construction. Writing $I(n)=\frac{\sigma(n)}{n}$ for the abundancy index, integers with the same value of $I$ are traditionally called friends, and Pollack and Pomerance developed quantitative and structural aspects of friendly sets \cite{PollackPomerance2016}. Ford combined large equal-abundancy collections with modern results on prime tuples to prove unconditionally that $\sigma(n+h)=\sigma(n)$ has infinitely many solutions for a positive proportion of integers $h\ge1$ \cite{Ford2022}.

The equality condition is one level of a broader problem. The equation
\[
\sigma(n+1)=k\sigma(n)
\]
was studied by Torabi and Fatehizadeh by elementary and existential methods \cite{TorabiFatehizadeh2021}. For a fixed positive shift $h$, equality is precisely the level $k=1$ of the integer-multiplier problem, while every integer-multiplier solution is automatically a divisibility solution. More precisely,
\[
\{n\in\N:\sigma(n+h)=\sigma(n)\}
\subseteq
\bigcup_{k\in\N}\{n\in\N:\sigma(n+h)=k\sigma(n)\}
=
\mathcal B_h,
\]
where
\[
\mathcal B_h=\{n\in\N:\sigma(n)\mid\sigma(n+h)\},
\qquad
B_h(x)=\#(\mathcal B_h\cap[1,x]).
\]
For the consecutive shift $h=1$, the elements of $\mathcal B_1$
form OEIS sequence A058072 \cite{OEIS2026}. The structural set that arises in the analysis of $\mathcal B_h$ is
\[
\mathcal V_h
=
\left\{v\ge2:\frac{I(v)}{I(v+h)}\in\N\right\}.
\]
Its ratio-one slice $I(v)=I(v+h)$ is the equal-abundancy condition
occurring in the structured shifted-equality constructions just described,
whereas $\mathcal V_h$ allows an arbitrary positive integral quotient of
two shifted abundancy indices. For later comparison with arbitrary proportionality levels, if $h\in\Z\setminus\{0\}$ and $\lambda>0$, we also write
\[
A_{h,\lambda}(x)=\#\{n\le x:n+h\ge1,\ \sigma(n+h)=\lambda\sigma(n)\}.
\]

Our principal result concerns the full divisibility set $\mathcal B_h$. Its simplest consequence is
\[
B_h(x)\ll_h\frac{x}{(\log x)^2}
\]
for every fixed $h\ge1$. The stronger statement isolates explicitly the part of $\mathcal B_h$ arising from the integral-abundancy resonances in $\mathcal V_h$. For $v\in\mathcal V_h$, writing $\delta=(v,h)$, the corresponding regular solutions arise from simultaneous primality of
\[
\frac{v}{\delta}L-1
\qquad\text{and}\qquad
\frac{v+h}{\delta}L-1,
\]
together with the additional arithmetic conditions specified in Section~\ref{sec:divisibility}. Let $\mathcal R_h$ denote this regular family, and put $R_h(x)=\#(\mathcal R_h\cap[1,x])$.

Theorem~\ref{thm:regular-decomposition} states that, for every fixed integer
$h\ge1$,
\[
\mathcal R_h\subseteq\mathcal B_h,
\qquad
R_h(x)\ll_h\frac{x}{(\log x)^2},
\]
and
\[
0\le B_h(x)-R_h(x)
\le
x\exp\!\left(
-\left(\frac1{\sqrt6}+o_h(1)\right)
\sqrt{\log x\,\log\log x}
\right)
\]
as $x\to\infty$. Thus the theorem separates an explicit regular source of divisibility
solutions from a complementary set satisfying a subexponential upper bound. It also yields a dichotomy governed by $\mathcal V_h$. If $\mathcal V_h=\varnothing$, the subexponential term above bounds all of $B_h(x)$; if $\mathcal V_h\ne\varnothing$, the Bateman--Horn conjecture applied to an associated pair of affine-linear forms gives
\[
B_h(x)\asymp_h\frac{x}{(\log x)^2}.
\]
Since every integer-multiplier level is contained in $\mathcal B_h$, Theorem~\ref{thm:regular-decomposition} also gives, uniformly over $k\in\N$,
\[
A_{h,k}(x)\ll_h\frac{x}{(\log x)^2}.
\]

There is a complementary line of work in which the emphasis is on joint
values of multiplicative functions in substantially greater generality.
Mangerel's Theorem~1.1 treats fixed scalar relations between multiplicative
functions evaluated on two primitive nonproportional affine forms and
proves, under a divergence condition on prime values, that the corresponding
solution set has logarithmic density zero. Remark~1.2 explains the
restriction to logarithmic density: the argument relies on Tao's
logarithmically averaged binary-correlation theorem, while no unconditional
Ces\`aro-averaged result of the required strength is available. The proof
passes through the twists $|f|^{it}$ and combines logarithmically averaged
non-pretentious correlation estimates with inverse-sumset input from
continuous additive combinatorics.

For $f_1=f_2=\sigma$, the divergence hypothesis is immediate from
$\sigma(p)=p+1$. For a positive fixed shift $h$, Theorem~1.1 therefore
applies directly to the relation $\sigma(n+h)=\lambda\sigma(n)$ for any
fixed $\lambda>0$, and yields logarithmic-density zero. If $h<0$, write
$H=-h>0$ and set $m=n-H$. Then
\[
\sigma(n+h)=\lambda\sigma(n)
\]
is equivalent to
\[
\sigma(m+H)=\lambda^{-1}\sigma(m).
\]
Thus the negative-shift case follows from the positive-shift case by
reindexing and replacing the level by its reciprocal. Mangerel's theorem
therefore gives logarithmic-density zero for every fixed nonzero shift and
every fixed proportionality level. It does not, however, provide a
quantitative natural-density estimate uniform in the proportionality
parameter \cite{Mangerel2024}.

The latter is provided by Theorem~\ref{thm:uniform-levelset}. It states that there exists an absolute constant $C>0$ such that, for every nonzero integer $h$, there exists $x_0(h)$ for which, for every $x\ge x_0(h)$,
\[
\sup_{\lambda>0}A_{h,\lambda}(x)
\le
C\frac{x}{\sqrt{\log\log\log x}}.
\]
The constant $C$ is absolute; only the threshold $x_0(h)$ depends on the shift, and the estimate is simultaneous in all real $\lambda>0$.

For positive $h$ and integer $k$, the $x/(\log x)^2$ consequence of
Theorem~\ref{thm:regular-decomposition} is quantitatively stronger. The scope of Theorem~\ref{thm:uniform-levelset} is different: it applies to every fixed nonzero shift, including negative shifts, and to every real level $\lambda>0$, uniformly in $\lambda$. It therefore supplies a quantitative natural-density estimate in a setting in which Mangerel's much broader multiplicative-function theorem gives a qualitative logarithmic-density conclusion. The two results address different aspects of the same level-set problem.

The proofs of Theorems~\ref{thm:regular-decomposition} and~\ref{thm:uniform-levelset} also have distinct structures. The argument in Section~\ref{sec:divisibility} begins by isolating large prime factors of $n$ and $n+h$. After removing the smooth cases and the large-prime-square exceptional
set, the divisibility condition leads to a normalized Diophantine equation
relating the two large primes. Nonresonant configurations are estimated by divisor bounds and sieve arguments, while the unique resonant configuration forces an integral quotient of shifted abundancy indices and produces two affine-linear prime forms. This is the origin of $\mathcal V_h$ and of the regular family $\mathcal R_h$: the resonant configuration is precisely the structure that remains after the nonresonant configurations have been estimated. A two-linear-form upper-bound sieve then controls the resulting regular families.

Section~\ref{sec:levelsets} proceeds independently. Writing
\[
g(n)=\log\frac{\sigma(n)}{n},
\]
a relation between $\sigma(n+h)$ and $\sigma(n)$ becomes a level-set condition for $g(n+h)-g(n)$. After truncating $g$ to bounded prime powers, the truncated difference is exactly periodic. Uniform measure on a complete period admits an exact product decomposition, furnished by the Chinese remainder theorem, into independent local prime coordinates. The resulting local variables have sufficient dispersion at primes not dividing $h$ for the Kolmogorov--Rogozin concentration inequality to apply. Primes dividing the fixed shift remain in the exact product model but are placed in an independent convolution remainder; the anti-concentration
estimate is obtained from the remaining prime coordinates. This method does not use the integrality, or even the rationality, of $\sigma(n+h)/\sigma(n)$; accordingly, it applies to all real $\lambda>0$ and to negative as well as positive fixed shifts.

The quantitative sparsity results are complemented in Section~\ref{sec:conditional} by a conditional existence theorem for multiplier $2$. Consider
\[
n_t=(252t+223)(6t+5),
\qquad
n_t+1=6(7t+6)(36t+31).
\]
Whenever the four displayed affine-linear factors are simultaneously prime,
\[
\sigma(n_t+1)=2\sigma(n_t).
\]
Schinzel's Hypothesis $H$ therefore gives infinitely many distinct solutions from this family. Under the Bateman--Horn conjecture, the counting function $F(x)$ defined in
Section~\ref{sec:conditional} satisfies
\[
F(x)
\sim
\frac{8\mathfrak S}{3\sqrt{42}}
\frac{\sqrt{x}}{(\log x)^4},
\qquad
\mathfrak S>0.
\]
The same family has additional divisor-partition properties that follow directly from its particular factorization, but no analogous structural conclusion is asserted for arbitrary solutions of
\[
\sigma(n+h)=k\sigma(n).
\]
The coexistence of quantitative sparsity with conditional infinitude motivates the fixed-shift extension of the Erd\H{o}s--Sierpi\'nski conjecture formulated at the end of the paper:
\[
\#\{n\ge1:\sigma(n+h)=k\sigma(n)\}=\infty
\qquad(h,k\ge1).
\]
The case $(h,k)=(1,1)$ is the classical Erd\H{o}s--Sierpi\'nski conjecture. We also formulate the more specific conjecture $\mathcal V_1=\varnothing$, concerning the integral-abundancy resonance underlying the structural
dichotomy in Section~\ref{sec:divisibility}.

Section~\ref{sec:preliminaries} collects the analytic, probabilistic, and
divisor-partition preliminaries. Section~\ref{sec:divisibility} proves the fixed-shift divisibility theorem and develops the associated integral-abundancy structure. Section~\ref{sec:levelsets} establishes the uniform proportionality estimate
by means of the exact periodic product model and an anti-concentration
argument. Section~\ref{sec:conditional} gives the conditional multiplier-two construction, its restricted structural consequences, and the conjectures arising from the preceding results.

\section{Preliminaries}\label{sec:preliminaries}

We collect the notation and the auxiliary analytic, probabilistic, and divisor-partition results used throughout the paper. We write $\mathbb N=\{1,2,\ldots\}$, and all logarithms are natural.

\subsection{Analytic preliminaries}

For $n\in\mathbb N$, let
\[
\sigma(n)=\sum_{d\mid n}d,
\qquad
\tau(n)=\sum_{d\mid n}1.
\]

For $n>1$, let $P^+(n)$ denote the largest prime factor of $n$, with $P^+(1)=1$. We write
\[
I(n)=\frac{\sigma(n)}{n}
\]
for the abundancy index. As usual, $f\ll g$ means $f=O(g)$; subscripts indicate the parameters on which the implied constant may depend.

For $X\ge3$ and $2\le y\le X$, define
\[
\Psi(X,y)=\#\{n\le X:P^+(n)\le y\},
\qquad
u=\frac{\log X}{\log y}.
\]
We shall use the estimate
\begin{equation}
\Psi(X,y)
\le
X\exp\{-(1+o(1))u\log u\}.
\label{eq:smooth}
\end{equation}
The parameters used in Section~\ref{sec:divisibility} satisfy
\[
\log y\asymp\sqrt{\log X\,\log\log X},
\]
and hence $u\to\infty$. Estimate~\eqref{eq:smooth} is valid in this range by standard smooth-number estimates; see Canfield, Erd\H{o}s and Pomerance
\cite{CanfieldErdosPomerance1983} and Hildebrand and Tenenbaum
\cite[Corollary~1.3]{HildebrandTenenbaum1993}.

We shall also use
\begin{equation}
I(n)
\le
\prod_{p\mid n}
\left(1-\frac1p\right)^{-1}
\ll
\log\log(3n).
\label{eq:abundancy-bound}
\end{equation}
We use the classical estimates
\[
\prod_{p\le y}\left(1-\frac1p\right)\asymp\frac1{\log y},
\qquad
\sum_{p\le y}\frac1p=\log\log y+O(1),
\]
together with
\[
\pi(y)\ll\frac{y}{\log y},
\qquad
\vartheta(y):=\sum_{p\le y}\log p\le2y
\]
for all sufficiently large $y$. Partial summation then gives
\[
\sum_{p>y}\frac1{p^2}\ll\frac1{y\log y}.
\]
We shall also use the elementary estimate
\[
\sum_{n\le X}\tau(n)\ll X\log(2X).
\]

We also need an average bound for $\tau(\sigma(n))$. Luca and Pomerance note that the corresponding estimate for $\tau(\varphi(n))$ remains valid with $\varphi$ replaced by $\sigma$; we use this result together with their corrigendum
\cite{LucaPomerance2007,LucaPomeranceCorr2016}.
Thus there is an absolute constant $C_0>0$ such that, for all sufficiently large $X$,
\begin{equation}
\sum_{n\le X}\tau(\sigma(n))
\le
X\exp\left(
C_0\sqrt{\frac{\log X}{\log\log X}}
\right).
\label{eq:luca-pomerance}
\end{equation}

Partial summation gives
\begin{equation}
\sum_{n\le X}\frac{\tau(\sigma(n))}{n}
\ll
\log X\,
\exp\left(
C_0\sqrt{\frac{\log X}{\log\log X}}
\right).
\label{eq:partial-sum-tau-sigma}
\end{equation}

We also use the Selberg upper-bound sieve for two affine-linear forms. The precise specialization required below, including the local factors, is stated and proved in Lemma~\ref{lem:two-linear-sieve}; for the underlying sieve, see
\cite[Chapter~2]{HalberstamRichert1974}.

\subsection{Probabilistic preliminaries}

For a real-valued random variable $X$ and $L>0$, define its concentration function by
\[
Q_L(X)
=
\sup_{a\in\mathbb R}
\Pr\{X\in[a,a+L]\}.
\]
The following elementary monotonicity property will be useful.

\begin{lemma}\label{lem:convolution}
If $X$ and $Y$ are independent real-valued random variables, then
\[
Q_L(X+Y)\le Q_L(X).
\]
\end{lemma}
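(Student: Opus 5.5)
The plan is to condition on $Y$ and use independence, so that the concentration function of $X+Y$ becomes an average of concentration probabilities of $X$ alone. Let $\mu_Y$ denote the law of $Y$ and fix $a\in\mathbb R$. Independence lets us write the probability of the event $\{X+Y\in[a,a+L]\}$ as an iterated integral over the product measure. We use Fubini's theorem (Tonelli suffices, since the integrand is an indicator function, hence nonnegative and measurable):
\[
\Pr\{X+Y\in[a,a+L]\}
=
\int_{\mathbb R}\Pr\{X\in[a-y,\,a-y+L]\}\,d\mu_Y(y).
\]

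Next we bound the integrand pointwise. For each fixed $y$, the interval $[a-y,a-y+L]$ is a translate of length $L$, so by the definition of $Q_L$,
\[
\Pr\{X\in[a-y,a-y+L]\}\le Q_L(X).
\]
Integrating this bound against the probability measure $\mu_Y$ gives
\[
\Pr\{X+Y\in[a,a+L]\}\le Q_L(X).
\]
Taking the supremum over $a\in\mathbb R$ then yields $Q_L(X+Y)\le Q_L(X)$.

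The only point needing any care is the first step, namely justifying the disintegration. Since $(X,Y)$ has product law $\mu_X\otimes\mu_Y$, the event $\{x+y\in[a,a+L]\}$ is a Borel set in $\mathbb R^2$. Its $y$-section is exactly $\{x:x\in[a-y,a-y+L]\}$, and the measurability of $y\mapsto\mu_X([a-y,a-y+L])$ is part of Tonelli's theorem. No further obstacle is expected; the argument is symmetric, so the same bound with $Q_L(Y)$ on the right also holds.
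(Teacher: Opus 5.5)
Your proof is correct and is essentially the paper's argument: the paper writes the probability as $\mathbb E\bigl[\Pr\{X\in[a-Y,a+L-Y]\mid Y\}\bigr]$, which is exactly your Tonelli disintegration against $\mu_Y$, then bounds the integrand by $Q_L(X)$ and takes the supremum over $a$. Your explicit measurability justification via the product law is simply a more careful rendering of that same conditioning step.
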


\begin{proof}
For every $a\in\mathbb R$,
\[
\begin{aligned}
\Pr\{X+Y\in[a,a+L]\}
&=
\mathbb E\!\left[
\Pr\{X\in[a-Y,a+L-Y]\mid Y\}
\right]\\
&\le Q_L(X).
\end{aligned}
\]
Taking the supremum over $a$ proves the result.
\end{proof}

We use the following form of the Kolmogorov--Rogozin inequality.

\begin{theorem}[Kolmogorov--Rogozin]\label{thm:KR}
Let $X_1,\ldots,X_s$ be independent real-valued random variables, and put $S=X_1+\cdots+X_s$. If $L>0$ and
\[
0<L_j\le2L
\qquad (1\le j\le s),
\]
then
\[
Q_L(S)
\le
C_{\mathrm{KR}}L
\left(
\sum_{j=1}^s
L_j^2\bigl(1-Q_{L_j}(X_j)\bigr)
\right)^{-1/2},
\]
where $C_{\mathrm{KR}}>0$ is an absolute constant.
\end{theorem}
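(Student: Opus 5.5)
The plan is the classical Fourier-analytic proof of Esseen (as presented in Petrov's \emph{Sums of Independent Random Variables}, Chapter~III). The three steps are: a smoothing inequality, a pointwise bound for each factor $|\varphi_j|$, and an averaging argument over $t$. Since the statement is a standard result, in the paper I would mainly cite it and record this sketch.

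\textbf{Step 1 (smoothing).} Write $\varphi_j(t)=\mathbb E e^{itX_j}$. Esseen's smoothing inequality gives
\[
Q_L(S)\le C\,L\int_{|t|\le 1/L}\prod_{j=1}^s|\varphi_j(t)|\,dt .
\]
To prove it, compare $\Pr\{S\in[a,a+L]\}$ with the integral of a Fej\'er-type kernel whose Fourier transform is supported in $[-1/L,1/L]$, then apply Parseval.

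\textbf{Step 2 (each factor).} Let $Y_j=X_j-X_j'$ be the symmetrization, with $X_j'$ an independent copy of $X_j$. Then $|\varphi_j(t)|^2=\mathbb E\cos(tY_j)$. Using $x\le e^{-(1-x)/2}$ for $x=|\varphi_j(t)|\in[0,1]$, we get
\[
|\varphi_j(t)|\le\exp\Bigl(-\tfrac12\,\mathbb E\bigl[1-\cos(tY_j)\bigr]\Bigr).
\]
The link with the concentration function is elementary. Conditioning on $X_j'$ gives $\Pr\{|Y_j|\le L_j/2\}\le Q_{L_j}(X_j)$. Hence
\[
\Pr\{|Y_j|>L_j/2\}\ge 1-Q_{L_j}(X_j).
\]

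\textbf{Step 3 (averaging over $t$; the main obstacle).} The difficulty is that $1-\cos(ty)$ has no pointwise lower bound of the form $c\min(t^2y^2,1)$ on $|t|\le1/L$: when $|y|$ is large, $ty$ can lie near a multiple of $2\pi$. I would deal with this by averaging in $t$ rather than bounding pointwise.

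First, apply H\"older or Jensen to $\exp\bigl(-\tfrac12\sum_j\mathbb E[1-\cos(tY_j)]\bigr)$, which reduces matters to mixtures of one-dimensional integrals indexed by the values $y$ of the $Y_j$. Second, use the elementary fact that, for $|y|\ge L_j/2$ and $L_j\le 2L$,
\[
\frac{L}{2}\int_{|t|\le 1/L}\bigl(1-\cos(ty)\bigr)\,dt\ \ge\ c>0 .
\]
In Petrov's formulation one proves the equivalent bound
\[
\int_{|t|\le 1/L}\exp\Bigl(-\sum_j a_j\,\mathbb E[1-\cos tY_j]\Bigr)dt\ \ll\ \Bigl(\sum_j L_j^2\Pr\{|Y_j|\ge L_j/2\}\Bigr)^{-1/2}.
\]
This is done by replacing the $Y_j$ with truncated variables $Y_j\mathbf 1_{\{L_j/2\le|Y_j|\le 2L\}}$ (rescaled as needed) and comparing with a Gaussian integral, using $1-\cos u\ge u^2/5$ for $|u|\le 2$. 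The hypothesis $L_j\le 2L$ is exactly what makes $|tY_j|$ bounded on the truncated range.

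Finally, combining Steps 1--3 yields
\[
Q_L(S)\le C_{\mathrm{KR}}L\Bigl(\sum_j L_j^2\bigl(1-Q_{L_j}(X_j)\bigr)\Bigr)^{-1/2}.
\]
The main technical care in Step 3 is the case where the $Y_j$ put most of their mass beyond $2L$. I would handle it by the rescaling in Esseen's argument, or else by first reducing to $L_j=2L$, using the monotonicity of $L_j^2(1-Q_{L_j})$ up to constants.
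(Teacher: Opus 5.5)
Citing the result, as you intend, is exactly what the paper does. It gives no proof and refers to Rogozin's inequality in the form reproduced by Kesten (Theorem~1, equation~(1.2)). The Esseen-type sketch you attach, however, has a genuine gap at precisely the point you flag, so it should not be recorded as it stands.

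\textbf{The gap in Step~3.} First, your ``elementary fact'' is false unless $L_j\asymp L$. One computes $\frac L2\int_{|t|\le1/L}(1-\cos ty)\,dt=1-\frac{\sin(y/L)}{y/L}\asymp\min(1,y^2/L^2)$, which for $|y|\ge L_j/2$ is only $\gg L_j^2/L^2$. More seriously, it is the wrong kind of input. After Jensen in $y$ you must bound $\int_{|t|\le1/L}\exp\bigl(-\tfrac D2(1-\cos ty)\bigr)\,dt$ from above. A lower bound for the $t$-average of $1-\cos ty$ cannot produce the required $D^{-1/2}$ decay, because Jensen in $t$ runs the wrong way. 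Second, the fallback of reducing to $L_j=2L$ fails, because $\ell\mapsto\ell^2(1-Q_\ell(X))$ is not monotone even up to constants. If $X_j$ is uniform on $\{0,2L\}$, then $Q_{2L}(X_j)=1$ but $Q_L(X_j)=\frac12$. For a sum of $n$ such variables, only the choice $L_j=L$ recovers the true order $Q_L(S)\asymp n^{-1/2}$. The appeal to ``rescaling in Esseen's argument'' leaves the case of mass beyond $2L$ unproved.

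\textbf{The missing step.} It is short. Split $\{|Y_j|\ge L_j/2\}$ into $A_j=\{L_j/2\le|Y_j|\le2L\}$ and $B_j=\{|Y_j|>2L\}$. Let $a(t)$ and $b(t)$ be the corresponding parts of $\frac12\sum_j\mathbb E[1-\cos tY_j]$.

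For $a$, your Gaussian comparison (with $1-\cos u\ge u^2/3$ for $|u|\le2$) gives $\int_{|t|\le1/L}e^{-a(t)}\,dt\ll W_A^{-1/2}$, where $W_A=\sum_jL_j^2\Pr(A_j)$.

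For $b$, put $D=\sum_j\Pr(B_j)$ and let $G$ be the normalized mixture of the laws of the $Y_j$ restricted to $B_j$. Jensen and Fubini give $\int_{|t|\le1/L}e^{-b(t)}\,dt\le\sup_{|y|>2L}\int_{|t|\le1/L}e^{-\frac D2(1-\cos ty)}\,dt$. Substitute $u=ty$, cover $|u|\le|y|/L$ by at most $|y|/(\pi L)+2$ periods, and use $1-\cos u\ge2u^2/\pi^2$ on $[-\pi,\pi]$. This gives a bound $\ll|y|^{-1}(|y|/L+1)D^{-1/2}\ll L^{-1}D^{-1/2}\le2W_B^{-1/2}$, where $W_B=\sum_jL_j^2\Pr(B_j)\le4L^2D$. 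Here the hypothesis $L_j\le2L$ is used a second time.

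Since $a,b\ge0$, we get $\int e^{-a-b}\le\min\bigl(\int e^{-a},\int e^{-b}\bigr)\ll\max(W_A,W_B)^{-1/2}\le(W/2)^{-1/2}$. Here $W=W_A+W_B\ge\sum_jL_j^2(1-Q_{L_j}(X_j))$, and Step~1 then finishes the proof.

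In the equal-scale case $L_j=L$, which is the only case the paper actually uses, this periodicity bound already holds for all $|y|\ge L/2$, so no truncation is needed.

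\textbf{A minor slip.} In Step~2, the inequality $x\le e^{-(1-x)/2}$ with $x=|\varphi_j(t)|$ does not put $1-|\varphi_j(t)|^2$ in the exponent. Instead, apply $u\le e^{-(1-u)}$ to $u=|\varphi_j(t)|^2$ and take square roots.
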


We shall use only the equal-scale case $L_j=L$, which gives
\[
Q_L(S)
\le
C_{\mathrm{KR}}
\left(
\sum_{j=1}^s
\bigl(1-Q_L(X_j)\bigr)
\right)^{-1/2}.
\]
This is Rogozin's inequality in the form reproduced by Kesten
\cite[Theorem~1, equation~(1.2)]{Kesten1969}.

In Section~\ref{sec:levelsets}, the required independence is exact: on a complete residue period, the Chinese remainder theorem identifies the uniform measure with a product of local uniform measures.

\subsection{Divisor-partition preliminaries}

Let $D(n)$ denote the set of positive divisors of $n$.

A positive integer $n$ is \emph{perfect} if $\sigma(n)=2n$, and is \emph{$k$-multiperfect}, for an integer $k\ge2$, if $\sigma(n)=kn$. It is \emph{abundant} if $\sigma(n)>2n$.

A positive integer $n$ is \emph{Zumkeller} if $D(n)$ can be partitioned into two subsets having equal sum. More generally, for $k\ge2$, the integer $n$ is \emph{$k$-layered} if there is a partition
\[
D(n)=D_1\sqcup\cdots\sqcup D_k
\]
such that
\[
\sum_{d\in D_1}d
=
\cdots
=
\sum_{d\in D_k}d
=
\frac{\sigma(n)}{k}.
\]
Thus the Zumkeller numbers are precisely the $2$-layered numbers. If $n$ is $k$-layered, then necessarily $k\mid\sigma(n)$ and $\sigma(n)\ge kn$; the latter follows from the class containing the divisor $n$.

Every perfect number is Zumkeller, since $\{n\}$ and the set of proper divisors of $n$ both have sum $n$.

Mahanta, Saikia and Yaqubi study Zumkeller and $k$-layered numbers, including their relations with practical numbers
\cite{MahantaSaikiaYaqubi2020}. Further results on $k$-layered numbers and their relation to $k$-multiperfect numbers are given by Jokar
\cite{Jokar2022}.

A positive integer $n$ is \emph{practical} if every integer $m$ with $1\le m\le n$ can be represented as a sum of distinct positive divisors of $n$. We shall use the following classical characterization.

\begin{theorem}[Stewart--Sierpi\'nski]\label{thm:practical}
Let
\[
n=p_1^{\alpha_1}\cdots p_r^{\alpha_r},
\qquad
p_1<\cdots<p_r,
\]
with $n\ge2$. Then $n$ is practical if and only if $p_1=2$ and
\[
p_j
\le
1+
\sigma\!\left(
\prod_{i<j}p_i^{\alpha_i}
\right)
\qquad (2\le j\le r).
\]
\end{theorem}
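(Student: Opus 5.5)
This is the classical Stewart--Sierpi\'nski characterization, and I would prove both directions by induction on the number $r$ of distinct prime factors. The tool throughout is the following closure lemma. Let $m$ be practical, so every integer in $[0,\sigma(m)]$ is a sum of distinct divisors of $m$ (the range extends from $[1,m]$ to $[0,\sigma(m)]$ by taking complements in $D(m)$). Let $p\nmid m$ be a prime with $p\le\sigma(m)+1$, and let $\alpha\ge1$. Then every integer in $[0,\sigma(mp^\alpha)]$ is a sum of distinct divisors of $mp^\alpha$.

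To prove the lemma, induct on $\alpha$. Write $S=\sigma(mp^{\alpha-1})$, and assume every $N\in[0,S]$ is representable by divisors of $mp^{\alpha-1}$. Take $N\le\sigma(mp^\alpha)=S+p^\alpha\sigma(m)$. Choose $j\in\{0,\dots,\sigma(m)\}$ with
\[
N-jp^\alpha\in[0,S].
\]
Such a $j$ exists because $S\ge p^{\alpha-1}\sigma(m)\ge p^\alpha-p^{\alpha-1}+\dots$. More simply, $S+1\ge p^{\alpha-1}(\sigma(m)+1)\ge p^\alpha$, so the intervals $[jp^\alpha,jp^\alpha+S]$ for $0\le j\le\sigma(m)$ cover $[0,\sigma(mp^\alpha)]$ without gaps. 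Next write $j$ as a sum of distinct divisors $d$ of $m$, so that $jp^\alpha$ is a sum of the distinct divisors $dp^\alpha$. Write $N-jp^\alpha$ using divisors of $mp^{\alpha-1}$. These two sets of divisors are disjoint, since one consists of divisors exactly divisible by $p^\alpha$ and the other of divisors not divisible by $p^\alpha$. This proves the lemma.

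For sufficiency, assume $p_1=2$ and the inequalities hold. Start from $m=1$, which trivially represents $[0,1]$. Apply the lemma with $p=2\le\sigma(1)+1$, and then successively with $p_j$, using the hypothesis $p_j\le1+\sigma(\prod_{i<j}p_i^{\alpha_i})$ at each step. This shows every integer up to $\sigma(n)\ge n$ is representable, so $n$ is practical.

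For necessity, suppose $n\ge2$ is practical. If $n$ is odd, then $n\ge3$ and $2$ is not a sum of distinct divisors, since the only divisor below $2$ is $1$; hence $p_1=2$. Now suppose some $j$ has $p_j>1+\sigma(m_j)$, where $m_j=\prod_{i<j}p_i^{\alpha_i}$, and take $N=\sigma(m_j)+1<p_j\le n$. Any divisor of $n$ that is at most $N$ is smaller than $p_j$. Since the $p_i$ are increasing, such a divisor has no prime factor $\ge p_j$, so it divides $m_j$. The divisors of $m_j$ sum to only $\sigma(m_j)<N$, so $N$ is not representable, a contradiction. The main point requiring care is the covering step inside the closure lemma, namely verifying $S+1\ge p^\alpha$ from $p\le\sigma(m)+1$; everything else is bookkeeping.
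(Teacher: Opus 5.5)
The paper gives no proof of this theorem. It quotes it as the classical result of Stewart and Sierpi\'nski, and uses only the ``only if'' direction, in Proposition~\ref{prop:structural-family}, to show that $N_t=2\cdot3\cdot pq$ is not practical.

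Your argument is the standard inductive proof and it is correct.
\begin{itemize}
\item The covering step holds. Since $S+1=\sigma(m)\frac{p^\alpha-1}{p-1}+1$ and $\sigma(m)\ge p-1$, you get $S+1\ge p^\alpha$.
\item The two blocks of divisors are disjoint because $p\nmid m$.
\item The necessity argument is sound. The integer $\sigma(m_j)+1<p_j\le n$ can only be built from divisors of $m_j$, whose total is $\sigma(m_j)$. This is exactly the mechanism behind the paper's application with $\sigma(6)+1=13$.
\end{itemize}

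One justification is wrong, though it is inessential. Your parenthetical says complementation in $D(m)$ extends representability from $[1,m]$ to $[0,\sigma(m)]$. Complements of representations of $[0,m]$ only cover $[\sigma(m)-m,\sigma(m)]$. This leaves a gap whenever $\sigma(m)>2m+1$. For example, with $m=12$ and $\sigma(12)=28$, the integers $13,14,15$ are not reached by that argument, although they are representable.

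This does no harm, because you never apply the lemma to an arbitrary practical $m$. Restate the lemma with the hypothesis ``every integer in $[0,\sigma(m)]$ is a sum of distinct divisors of $m$.'' That hypothesis holds for $m=1$, and it is exactly what the lemma's conclusion passes on at each step of the sufficiency induction. Also delete the false-start sentence just before ``More simply.''
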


This characterization is due independently to Stewart and Sierpi\'nski
\cite{Stewart1954,Sierpinski1955}.

Finally, $n$ is \emph{semiperfect} if it can be represented as a sum of distinct proper divisors. A \emph{weird number} is an abundant integer that is not semiperfect. Thus every abundant semiperfect number is non-weird.

No general divisor-partition conclusion is drawn from an equation
\[
\sigma(n+h)=k\sigma(n).
\]
The structural properties used in Section~\ref{sec:conditional} are proved directly for the explicit family considered there.

\section{Fixed-shift divisibility and integral abundancy structure}\label{sec:divisibility}

Throughout this section, $h\ge1$ is fixed. Define
\[
\mathcal B_h=\{n\in\N:\sigma(n)\mid\sigma(n+h)\},
\qquad
B_h(x)=\#(\mathcal B_h\cap[1,x]).
\]

\subsection{Structural setup and auxiliary results}

For $v\ge2$, put
\[
\delta_v=(v,h),
\qquad
a_v=\frac{v}{\delta_v},
\qquad
b_v=\frac{v+h}{\delta_v}.
\]
Then $(a_v,b_v)=1$ and $b_v-a_v=h/\delta_v$.

Define the exceptional set by
\begin{equation}\label{eq:Vh}
\mathcal V_h
=
\left\{v\ge2:\frac{I(v)}{I(v+h)}\in\N\right\}.
\end{equation}
Since
\[
\frac{I(v)}{I(v+h)}
=
\frac{(v+h)\sigma(v)}{v\sigma(v+h)},
\]
condition \eqref{eq:Vh} is equivalent to
\begin{equation}\label{eq:Vh-divisibility}
v\sigma(v+h)\mid(v+h)\sigma(v).
\end{equation}

For $v\in\mathcal V_h$ and $L\ge1$, set
\[
p_{v,L}=a_vL-1,
\qquad
q_{v,L}=b_vL-1.
\]
Let $\mathcal R_h$ be the set of integers
\begin{equation}\label{eq:regular-n}
n=(v+h)p_{v,L},
\end{equation}
where $v\in\mathcal V_h$, both $p_{v,L}$ and $q_{v,L}$ are prime, and
\begin{equation}\label{eq:cross-coprime}
(p_{v,L},v+h)=1,
\qquad
(q_{v,L},v)=1.
\end{equation}
Write $R_h(x)=\#(\mathcal R_h\cap[1,x])$.

\begin{proposition}\label{prop:integral-abundancy}
If $v\in\mathcal V_h$, then
\[
\frac{v}{(v,h)}\mid\sigma(v).
\]
In particular, if $(v,h)=1$, then $v$ is multiperfect.
\end{proposition}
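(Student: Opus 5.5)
From \eqref{eq:Vh-divisibility} we have $v\sigma(v+h)\mid (v+h)\sigma(v)$. In particular $v\mid(v+h)\sigma(v)$. Write $\delta=(v,h)=\delta_v$, so $v=\delta a_v$ and $v+h=\delta b_v$ with $(a_v,b_v)=1$. We will cancel the common factor $\delta$ and then use this coprimality.

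Cancelling $\delta$ from $\delta a_v\mid \delta b_v\,\sigma(v)$ gives
\[
a_v\mid b_v\,\sigma(v).
\]
Since $(a_v,b_v)=1$, Euclid's lemma yields $a_v\mid\sigma(v)$, that is,
\[
\frac{v}{(v,h)}\mid\sigma(v).
\]
The only point to verify is the coprimality $(a_v,b_v)=1$. It follows from $(v,v+h)=(v,h)=\delta$, and the paper already records it in the setup.

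For the second assertion, suppose $(v,h)=1$. Then $\delta=1$ and $v\mid\sigma(v)$. Moreover $\sigma(v)>v$ because $v\ge2$, so $\sigma(v)=kv$ with $k\ge2$. Hence $v$ is $k$-multiperfect in the sense of the preliminaries.

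There is no real obstacle here. The only care needed is to use just the weaker consequence $v\mid(v+h)\sigma(v)$ of the full divisibility and not attempt more. The remaining factor $\sigma(v+h)$ plays no role in this argument.
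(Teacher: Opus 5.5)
Your proof is correct and is essentially the paper's argument. The paper writes the condition as $b_v\sigma(v)=r\,a_v\sigma(v+h)$ with $r\in\N$, reads off $a_v\mid b_v\sigma(v)$, and concludes using $(a_v,b_v)=1$, exactly as you do. It also uses the same observation $\sigma(v)/v>1$ for the multiperfect claim.
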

\begin{proof}
Put $\delta=(v,h)$, $a=v/\delta$, and $b=(v+h)/\delta$. By \eqref{eq:Vh}, there is an integer $r\ge1$ such that
\[
\frac{I(v)}{I(v+h)}=r.
\]
Equivalently,
\[
b\sigma(v)=ra\sigma(v+h).
\]
Hence $a\mid b\sigma(v)$. Since $(a,b)=1$, it follows that $a\mid\sigma(v)$, proving the first assertion.

If $(v,h)=1$, then $v\mid\sigma(v)$. Since $v>1$, the integer $\sigma(v)/v$ is greater than $1$, and hence $v$ is multiperfect.
\end{proof}

The subcase $I(v)=I(v+h)$ is the equal-abundancy case. For parallel affine forms, Yamada's Theorem~1.1 leads to the same pair of linear prime forms $a_vL-1$ and $b_vL-1$; see
\cite[Theorem~1.1]{Yamada2017}.

The proof of Theorem~\ref{thm:regular-decomposition} uses the following three elementary lemmas and a two-linear-form sieve estimate.

\begin{lemma}\label{lem:factor-allocation}
Let $A,B,U,V,K$ be positive integers such that $(A,B)=1$ and $AV=BUK$. Then there exist positive integers $d_2,d_3,d_4,d_5$ satisfying
\[
A=d_2d_3,\qquad
U=d_2d_4,\qquad
V=Bd_4d_5,\qquad
K=d_3d_5.
\]
\end{lemma}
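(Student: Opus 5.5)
Since $(A,B)=1$ and $A\mid AV=BUK$, we get $A\mid UK$. The plan is to split $A$ between $U$ and $K$ by taking a gcd, and then read off the remaining factors. The order of choices matters, because we want $V/B$ to come out as a product of the cofactor of $U$ and the cofactor of $K$.

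\emph{Step 1: define $d_2$ and $d_3$.} Put $d_2=(A,U)$ and $d_3=A/d_2$, so that $A=d_2d_3$. Then define $d_4=U/d_2$, which is a positive integer, so $U=d_2d_4$. By construction, $(d_3,d_4)=1$: indeed $\bigl(A/(A,U),\,U/(A,U)\bigr)=1$.

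\emph{Step 2: show $d_3\mid K$.} Substituting into $AV=BUK$ gives
\[
d_2d_3V=Bd_2d_4K,
\qquad\text{so}\qquad
d_3V=Bd_4K.
\]
Since $d_3\mid A$ and $(A,B)=1$, we have $(d_3,B)=1$; together with $(d_3,d_4)=1$ this gives $d_3\mid K$. Set $d_5=K/d_3$, so $K=d_3d_5$.

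\emph{Step 3: recover $V$.} Cancelling $d_3$ in $d_3V=Bd_4d_3d_5$ yields $V=Bd_4d_5$, as required.

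All four quantities $d_2,d_3,d_4,d_5$ are positive integers, since they are quotients of positive integers by divisors. There is no real obstacle here. The only point requiring care is choosing $d_2=(A,U)$ rather than $(A,K)$, since it is this choice that gives the coprimality $(d_3,d_4)=1$ used in Step~2 to conclude $d_3\mid K$.
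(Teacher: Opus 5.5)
Your proof is correct and is essentially the paper's argument: the paper also sets $d_2=(A,U)$ and uses $(d_3,d_4)=1$ to conclude $d_3\mid K$. The only difference is that the paper first extracts $B\mid V$ (writing $V=BT$), whereas you use $(d_3,B)=1$ directly.

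Your closing remark is slightly off, though it does not affect the proof. The statement is symmetric under $U\leftrightarrow K$, $d_2\leftrightarrow d_3$, $d_4\leftrightarrow d_5$, so starting from $(A,K)$ works equally well.
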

\begin{proof}
Since $(A,B)=1$, the relation $AV=BUK$ implies $B\mid V$. Write $V=BT$, and put
\[
d_2=(A,U),\qquad A=d_2d_3,\qquad U=d_2d_4.
\]
Then $(d_3,d_4)=1$, and cancellation gives $d_3T=d_4K$. Thus $d_3\mid K$; writing $K=d_3d_5$ gives $T=d_4d_5$, as required.
\end{proof}

\begin{lemma}\label{lem:diophantine}
Let $u,v,h$ be positive integers, and suppose
\begin{equation}\label{eq:diophantine}
Qv-Pu=h
\end{equation}
has an integer solution. Put $g=(u,v)$, $a=u/g$, and $b=v/g$. Then $g\mid h$, $(a,b)=1$, and every integer solution of \eqref{eq:diophantine} is of the form
\begin{equation}\label{eq:dioph-param}
P=P_0+b\lambda,
\qquad
Q=Q_0+a\lambda,
\qquad
\lambda\in\Z.
\end{equation}
If two parameters $\lambda,\lambda'$ both satisfy
\begin{equation}\label{eq:dioph-cong}
P\equiv Q\equiv-1\pmod d,
\end{equation}
then $\lambda\equiv\lambda'\pmod d$. Moreover, whenever \eqref{eq:dioph-cong} holds,
\begin{equation}\label{eq:d-divides-shift}
d\mid u-v-h.
\end{equation}
\end{lemma}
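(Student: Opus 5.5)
The plan is to prove the four assertions directly from the equation $Qv-Pu=h$. Throughout, $d$ denotes an arbitrary positive integer, and $(P,Q)$ ranges over integer solutions of \eqref{eq:diophantine}. Nothing from earlier in the paper is needed beyond elementary divisibility, and I do not expect any step to be a real obstacle.

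First, the divisibility $g\mid h$ and the coprimality $(a,b)=1$. Since $g=(u,v)$ divides both $u$ and $v$, it divides $Qv-Pu=h$ for any solution, so the assumed existence of a solution gives $g\mid h$. The equality $(a,b)=(u/g,v/g)=1$ is the standard consequence of dividing by the greatest common divisor.

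Next, the parametrization \eqref{eq:dioph-param}. Fix one solution $(P_0,Q_0)$. A direct check shows that every pair in \eqref{eq:dioph-param} is a solution, because
\[
(Q_0+a\lambda)v-(P_0+b\lambda)u=h+\lambda(av-bu)=h+\lambda\Bigl(\frac{uv}{g}-\frac{uv}{g}\Bigr)=h .
\]
Conversely, let $(P,Q)$ be any solution and put $\Delta_P=P-P_0$ and $\Delta_Q=Q-Q_0$. Subtracting the two equations gives $\Delta_Q v=\Delta_P u$, and dividing by $g$ gives $\Delta_Q b=\Delta_P a$. Since $(a,b)=1$, we get $a\mid\Delta_Q$, say $\Delta_Q=a\lambda$. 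Substituting back yields $ab\lambda=a\Delta_P$, hence $\Delta_P=b\lambda$, which is the required form.

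Then the uniqueness of $\lambda$ modulo $d$. Suppose $\lambda$ and $\lambda'$ give solutions $(P,Q)$ and $(P',Q')$ that both satisfy \eqref{eq:dioph-cong}. Then
\[
d\mid P-P'=b(\lambda-\lambda'),\qquad d\mid Q-Q'=a(\lambda-\lambda').
\]
Choose integers $x,y$ with $xa+yb=1$, which exist because $(a,b)=1$. It follows that $d$ divides $x\,a(\lambda-\lambda')+y\,b(\lambda-\lambda')=\lambda-\lambda'$, so $\lambda\equiv\lambda'\pmod d$.

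Finally, \eqref{eq:d-divides-shift}. Reducing $Qv-Pu=h$ modulo $d$ under \eqref{eq:dioph-cong} gives $-v+u\equiv h\pmod d$, that is, $d\mid u-v-h$. The only point needing care is the sign bookkeeping in the parametrization (pairing $b$ with $P$ and $a$ with $Q$), and the displayed verification above settles it.
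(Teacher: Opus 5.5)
Your proposal is correct and follows essentially the same route as the paper: $g\mid h$ from $g\mid Qv-Pu$, the parametrization from the reduced equation $bQ-aP=h/g$ with $(a,b)=1$, Bézout to get $d\mid\lambda-\lambda'$, and reduction of $Qv-Pu=h$ modulo $d$. You also spell out the derivation of the parametrization, which the paper only asserts, and that verification is sound.
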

\begin{proof}
Since $g\mid Qv-Pu$, equation \eqref{eq:diophantine} gives $g\mid h$. Dividing by $g$, we obtain
\[
bQ-aP=\frac hg,
\]
with $(a,b)=1$, which gives \eqref{eq:dioph-param}.

If $\lambda$ and $\lambda'$ both satisfy \eqref{eq:dioph-cong}, subtraction gives
\[
d\mid b(\lambda-\lambda'),
\qquad
d\mid a(\lambda-\lambda').
\]
B\'ezout's identity and $(a,b)=1$ therefore imply $d\mid\lambda-\lambda'$. Finally, reducing \eqref{eq:diophantine} modulo $d$ under \eqref{eq:dioph-cong} gives $u-v\equiv h\pmod d$, proving \eqref{eq:d-divides-shift}.
\end{proof}

\begin{lemma}\label{lem:shifted-divisor-sum}
For every fixed $h\ge1$ and $X\ge2$,
\[
\sum_{\substack{uv\le X\\u-v-h\ne0}}
\tau(|u-v-h|)
\ll_h X(\log(2X))^2.
\]
\end{lemma}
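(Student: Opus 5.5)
We bound the sum by splitting according to which of $u,v$ is the smaller variable, and then reduce everything to averages of the divisor function over short ranges of a linear form. By symmetry of the condition $uv\le X$ up to the shift, it suffices to treat the ranges $u\le v$ and $v<u$ separately. In each range the smaller variable is at most $\sqrt X$, and the larger one runs over an interval of length at most $X/(\text{smaller})$.

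\textbf{Range $u\le v$.} Fix $u$ and write $m=v+h-u$, so $|u-v-h|=m$. As $v$ ranges over $[u,X/u]$, the integer $m$ runs over a set of consecutive integers contained in $[1,X/u+h]$; note $m\ge h\ge 1$ here, so $m\neq0$ automatically. Hence the inner sum is at most
\[
\sum_{m\le X/u+h}\tau(m)\ll_h \frac{X}{u}\log(2X),
\]
using the elementary estimate $\sum_{n\le Y}\tau(n)\ll Y\log(2Y)$ from Section~\ref{sec:preliminaries}, together with $X/u+h\ll_h X/u$ (valid since $u\le\sqrt X$). Summing over $u\le\sqrt X$ gives $\ll_h X\log(2X)\sum_{u\le\sqrt X}1/u\ll X(\log 2X)^2$.

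\textbf{Range $v<u$.} Fix $v\le\sqrt X$ and let $u$ run over $(v,X/v]$. Put $m=u-v-h$. Then $m$ runs over consecutive integers in $[-h,X/v]$, and we exclude $m=0$. The values with $|m|\le h$ contribute $O_h(1)$ per $v$, hence $O_h(\sqrt X)$ in total. The remaining values give at most $\sum_{m\le X/v}\tau(m)\ll (X/v)\log(2X)$. Summing over $v\le\sqrt X$ again yields $\ll X(\log 2X)^2$.

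Adding the two ranges proves the lemma. There is no genuine obstacle; the only care needed is that the shift $h$ moves the range of $m$ by a bounded amount, which is absorbed into the $\ll_h$ constant. The case $m<0$ is handled by taking absolute values, and these contribute only a bounded number of terms per fixed variable.
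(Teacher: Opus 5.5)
Your proof is correct and follows essentially the same route as the paper. In both, you fix the smaller variable, which is $\ll\sqrt X$, bound the inner sum of $\tau(|u-v-h|)$ by $\sum_{m\le Y}\tau(m)\ll Y\log(2Y)$, and then sum the harmonic series; the only difference is bookkeeping, since the paper splits by the sign of $u-v-h$ and treats $u\le 2h$ separately, whereas you split by the order of $u,v$ and absorb the $O_h(1)$ terms with $|m|\le h$ for each fixed $v$.
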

\begin{proof}
\begin{sloppypar}
First suppose $u>v+h$. Then $v<\sqrt X$. For fixed $v$, the integer $m=u-v-h$ runs through a subinterval of $1\le m\le X/v$. Hence
\[
\sum_u\tau(u-v-h)\ll\frac Xv\log(2X),
\]
by the standard divisor estimate from Section~\ref{sec:preliminaries}. Summing over $v<\sqrt X$ gives $O(X(\log(2X))^2)$.
\end{sloppypar}

Now suppose $u<v+h$. If $u>2h$, then $v>u-h>u/2$, and therefore $u<\sqrt{2X}$. For fixed $u$, the positive integer $m=v+h-u$ satisfies $m\le X/u+h$, so
\[
\sum_v\tau(v+h-u)
\ll_h
\left(\frac Xu+h\right)\log(2X).
\]
Summation over $u\ll_h\sqrt X$ gives the required bound. The $O_h(1)$ values $u\le2h$ are handled directly by the same divisor estimate.
\end{proof}

\begin{lemma}\label{lem:two-linear-sieve}
Let $A\ne B$ be coprime positive integers and $T\ge3$. Then
\begin{equation}\label{eq:sieve-general}
\#\{L\le T:AL-1\ \text{and}\ BL-1\ \text{are prime}\}
\ll
\frac{T}{(\log T)^2}
\prod_{p\mid AB(A-B)}\left(1-\frac1p\right)^{-1}.
\end{equation}
Consequently, if $g=(v,h)$,
\[
A=\frac vg,\qquad B=\frac{v+h}{g},
\]
then
\begin{equation}\label{eq:sieve-special}
\#\{L\le T:AL-1,\ BL-1\ \text{are prime}\}
\ll_h
\frac{T\log\log(3v)}{(\log T)^2}.
\end{equation}
\end{lemma}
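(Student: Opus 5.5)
The plan is to apply the Selberg upper-bound sieve for two affine-linear forms, as in \cite[Chapter~2]{HalberstamRichert1974}, to the polynomial $F(L)=(AL-1)(BL-1)$ on $L\le T$, and then specialize. First we discard the $L$ for which $AL-1\le\sqrt T$ or $BL-1\le\sqrt T$. There are $O(\sqrt T)$ such $L$, which is admissible. For the remaining $L$, both forms being prime forces $F(L)$ to have no prime factor $p\le z$ with $z=T^{c}$ for a small absolute constant $c>0$.

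For each prime $p$ let $\omega(p)$ be the number of residues $L\bmod p$ with $p\mid F(L)$. If $p\nmid AB$, the roots $L\equiv A^{-1}$ and $L\equiv B^{-1}$ are distinct unless $p\mid A-B$. Hence $\omega(p)=2$ when $p\nmid AB(A-B)$ and $\omega(p)\le1$ otherwise. If $p\mid A$, the form $AL-1$ is invertible mod $p$, so $\omega(p)\le1$; likewise for $p\mid B$. Since $(A,B)=1$, no prime divides both, so $\omega(p)<p$ always holds except when $p=2$. At $p=2$ with $\omega(2)=2$ there are no solutions beyond the discarded ones; this happens when $A,B$ are both odd, with $2\mid A-B$, which gives $\omega(2)=1$ anyway, so the only subtle case is excluded. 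The sieve dimension is $2$, and the Selberg sieve (or the fundamental lemma/Brun) gives
\[
\ll T\prod_{p\le z}\Bigl(1-\frac{\omega(p)}{p}\Bigr)
\ll \frac{T}{(\log T)^2}\prod_{p\mid AB(A-B)}\Bigl(1-\frac1p\Bigr)^{-1},
\]
using $(1-1/p)/(1-1/p)^2$-type comparisons and $\prod_{p\le z}(1-2/p)\asymp(\log z)^{-2}$ from the Mertens estimates in Section~\ref{sec:preliminaries}. The implied constant is absolute because the level of distribution needed is only $z^2\le T^{1/2}$, with remainder terms $|r_d|\le \omega(d)\ll \tau(d)^{2}$. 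This gives \eqref{eq:sieve-general}. The main care point is keeping the implied constant uniform in $A,B$. This works because the error terms do not depend on $A,B$, and the local factors at primes dividing $AB(A-B)$ are absorbed into the displayed product.

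For \eqref{eq:sieve-special}, we have $(A,B)=1$ and $A\ne B$, and $B-A=h/g$. So $AB(A-B)$ divides $v(v+h)h$, and the product is at most
\[
\prod_{p\mid v}\Bigl(1-\frac1p\Bigr)^{-1}\prod_{p\mid v+h}\Bigl(1-\frac1p\Bigr)^{-1}\prod_{p\mid h}\Bigl(1-\frac1p\Bigr)^{-1}.
\]
The last factor is $O_h(1)$. Each of the first two is $\ll\log\log(3v)$ by \eqref{eq:abundancy-bound}. This naively gives $(\log\log 3v)^2$, so the key step is to do better.

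To get a single factor, note that $(v,v+h)\mid h$. The primes dividing both $v$ and $v+h$ divide $h$, so the product over $p\mid v(v+h)$ equals $\prod_{p\mid v(v+h)/\text{common}}$, which is at most $O_h(1)$ times $\prod_{p\mid v(v+h)}(1-1/p)^{-1}$ over the distinct primes of $v(v+h)$. We then apply \eqref{eq:abundancy-bound} to the single integer $v(v+h)$, giving $\ll\log\log(3v(v+h))\ll_h\log\log(3v)$, as required.
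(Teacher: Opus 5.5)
Your proposal is correct and follows essentially the same route as the paper. Both use a dimension-two Selberg sieve on $(AL-1)(BL-1)$ with local densities $2$ or $1$ according as $p\nmid AB(A-B)$ or $p\mid AB(A-B)$, and both bound the exceptional product via $n/\varphi(n)\ll\log\log(3n)$ applied to a single integer ($AB$ in the paper, $v(v+h)$ in your version); note that getting the exponent $-1$ in \eqref{eq:sieve-general} needs $\omega(p)=1$ exactly at the exceptional primes, not merely $\omega(p)\le1$ as you wrote (your root count does give this), and the detour through $(v,v+h)\mid h$ is unnecessary, since a product over the distinct primes of $v(v+h)$ already counts each prime once.
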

\begin{proof}
For a prime $p$, let
\[
\rho(p)=\#\{L\bmod p:(AL-1)(BL-1)\equiv0\pmod p\}.
\]
If $p\nmid AB(A-B)$, both forms have distinct roots modulo $p$, so $\rho(p)=2$. If $p\mid A$ or $p\mid B$, exactly one form has a root; if $p\mid A-B$ and $p\nmid AB$, the two roots coincide. Thus
\begin{equation}\label{eq:rho}
\rho(p)=
\begin{cases}
2,&p\nmid AB(A-B),\\
1,&p\mid AB(A-B).
\end{cases}
\end{equation}
Moreover $2\mid AB(A-B)$, so $\rho(p)<p$ for every prime $p$.

Apply the two-dimensional Selberg upper-bound sieve from Section~\ref{sec:preliminaries} with sifting level $T^{1/3}$. Apart from $O(T^{1/3})$ values for which one of the prime forms is itself below the sifting level, this gives
\[
\ll T\prod_{p<T^{1/3}}\left(1-\frac{\rho(p)}p\right).
\]
For $p\nmid AB(A-B)$,
\[
\frac{1-2/p}{(1-1/p)^2}=1-\frac1{(p-1)^2},
\]
and the corresponding product is bounded. At an exceptional prime the relative factor is $(1-1/p)^{-1}$. Mertens' estimate therefore gives \eqref{eq:sieve-general}; the $O(T^{1/3})$ contribution is absorbed by its right-hand side.

For the specialization in \eqref{eq:sieve-special},
\[
A-B=-\frac hg.
\]
Hence the factors contributed by primes dividing $A-B$ are $O_h(1)$, while
\[
\prod_{p\mid AB}\left(1-\frac1p\right)^{-1}
=\frac{AB}{\varphi(AB)}
\ll_h\log\log(3v).
\]
Substitution into \eqref{eq:sieve-general} proves \eqref{eq:sieve-special}.
\end{proof}

\subsection{The fixed-shift regular decomposition}

\begin{theorem}\label{thm:regular-decomposition}
For every fixed integer $h\ge1$,
\[
\mathcal R_h\subseteq\mathcal B_h,
\qquad
R_h(x)\ll_h\frac{x}{(\log x)^2}.
\]
Moreover, as $x\to\infty$,
\begin{equation}\label{eq:regular-remainder}
0\le B_h(x)-R_h(x)
\le
x\exp\!\left(
-\left(\frac1{\sqrt6}+o_h(1)\right)
\sqrt{\log x\,\log\log x}
\right).
\end{equation}
\end{theorem}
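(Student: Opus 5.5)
The plan has three parts: verify the inclusion $\mathcal R_h\subseteq\mathcal B_h$ by direct computation, bound $R_h(x)$ with Lemma~\ref{lem:two-linear-sieve}, and prove \eqref{eq:regular-remainder} by a large-prime decomposition.

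\emph{The inclusion.} Let $n=(v+h)p$ with $p=p_{v,L}=a_vL-1$, and write $\delta=\delta_v$. Since $v a_v(v+h)/v=(v+h)a_v=v b_v$, one checks that
\[
n+h=(v+h)a_vL-v=v(b_vL-1)=v\,q_{v,L}.
\]
The primes $p,q$ are coprime to $v+h$ and $v$ respectively by \eqref{eq:cross-coprime}. Multiplicativity then gives
\[
\sigma(n)=\sigma(v+h)\,a_vL,\qquad \sigma(n+h)=\sigma(v)\,b_vL,
\]
so that
\[
\frac{\sigma(n+h)}{\sigma(n)}=\frac{b_v\sigma(v)}{a_v\sigma(v+h)}=\frac{I(v)}{I(v+h)}\in\N
\]
because $v\in\mathcal V_h$.

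\emph{The bound for $R_h(x)$.} Since $a_v\ge v/h$, we have $n\ge (v+h)(a_v-1)\gg_h v^2$, hence $v\ll_h\sqrt x$, and $L\le T_v:=2x/((v+h)a_v)$. For $v\le x^{1/4}$ we have $\log T_v\asymp\log x$, and \eqref{eq:sieve-special} gives a contribution
\[
\ll_h \frac{x}{(\log x)^2}\sum_v\frac{h\log\log(3v)}{v^2}\ll_h\frac{x}{(\log x)^2}.
\]
For $v>x^{1/4}$ we bound trivially by $\sum_v T_v\ll_h x\sum_{v>x^{1/4}}v^{-2}\ll x^{3/4}$, which is negligible.

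\emph{The remainder.} Set $y=\exp\bigl(c\sqrt{\log x\log\log x}\bigr)$ with $c=\sqrt{3/2}$, so that $u=\log x/\log y$ satisfies $u\log u\sim(2c)^{-1}\sqrt{\log x\log\log x}$. By \eqref{eq:smooth}, the $n\le x$ with $n$ or $n+h$ being $y$-smooth contribute $x\exp(-(1/\sqrt6+o(1))\sqrt{\log x\log\log x})$. Integers with $p^2\mid n$ or $p^2\mid n+h$ for some $p>y$ contribute $O(x/y)$. For the rest we write $n=mp$ and $n+h=m'q$, where $p=P^+(n)>y$, $q=P^+(n+h)>y$, $p\nmid m$ and $q\nmid m'$. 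The divisibility hypothesis becomes
\[
\sigma(m')(q+1)=K\sigma(m)(p+1),\qquad qm'-pm=h.
\]
I would apply Lemma~\ref{lem:factor-allocation}, with $A,B$ the coprime parts of $\sigma(m),\sigma(m')$ (after removing their gcd) and $U=p+1$, $V=q+1$. This expresses $p+1$ and $q+1$ through the divisor variables $d_2,d_4$ together with $K$. Substituting into $qm'-pm=h$ yields a linear equation in the large primes of the form \eqref{eq:diophantine}. By Lemma~\ref{lem:diophantine}, once $m,m'$ and the divisor choices are fixed, the congruences $P\equiv Q\equiv-1$ pin down the free parameter modulo $d$, and $d$ divides $u-v-h$.

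Two cases then arise.
\begin{itemize}
\item[(i)] \emph{Nonresonant case}, $u-v-h\neq0$. Here $d$ is a divisor of a nonzero integer. Summing over the number of divisor choices, controlled by $\tau(\sigma(\cdot))$ through \eqref{eq:luca-pomerance} and \eqref{eq:partial-sum-tau-sigma}, and over $d$ via Lemma~\ref{lem:shifted-divisor-sum}, the count of admissible $p$ should lose a factor of roughly $y^{-1/3}$. That factor comes from the sizes forced by $p,q>y$ after the coprimality splitting. The total is then $x\,y^{-1/3}\exp\bigl(O(\sqrt{\log x/\log\log x})\bigr)=x\exp\bigl(-(1/\sqrt6+o(1))\sqrt{\log x\log\log x}\bigr)$, and the choice $c=\sqrt{3/2}$ is exactly what balances this against the smooth-number saving.
\item[(ii)] \emph{Resonant case}, $u-v-h=0$. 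Here the equation forces $p+1=a_vL$ and $q+1=b_vL$ with $m=v+h$ and $m'=v$. The divisibility then reads $a_v\sigma(v+h)\mid b_v\sigma(v)$, that is $v\in\mathcal V_h$, and $p\nmid m$, $q\nmid m'$ are exactly the conditions \eqref{eq:cross-coprime}. So these $n$ lie in $\mathcal R_h$ and are not counted in $B_h(x)-R_h(x)$.
\end{itemize}

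The main obstacle is the nonresonant estimate (i). One has to set up the factor allocation so that the number of parameter tuples is genuinely controlled by $\tau(\sigma(m))$ and the shifted divisor sum. One must also extract a saving of a definite power of $y$, which I expect to be $y^{1/3}$, from the constraint that the two primes exceed $y$. I would first try fixing $(m,m')$ with $mm'\le x^2/y^2$, counting $\lambda$ in a single residue class modulo $d$ over an interval of length $\ll x/(m\,b)$, and arranging the case split on the relative sizes of $d$ and $y$ to yield the exponent $1/3$.
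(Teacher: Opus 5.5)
Your inclusion argument, your resonant case (ii), and your bound for $R_h(x)$ are correct and agree with the paper. Your choice $\log y=\sqrt{3/2}\,\sqrt{\log x\log\log x}$ is also the paper's, but it is right only \emph{provided} the non-regular solutions with $P^+(n),P^+(n+h)>y$ number at most $x\,y^{-1/3}\exp(O(\sqrt{\log x/\log\log x}))$. That estimate is the whole content of \eqref{eq:regular-remainder}, and your item (i) only asserts it (``should lose a factor of roughly $y^{-1/3}$''). The route you indicate cannot produce it. Write $n=Pu$, $n+h=Qv$ (your $u=m$, $v=m'$), and let $d$ be a common divisor of $P+1,Q+1$, for instance your $d_4$. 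The constraint $d\mid u-v-h$ from Lemma~\ref{lem:diophantine} saves a factor $1/d$ only when $d$ is large. But $(P+1,Q+1)$ is always even and typically small, and then the resonant/nonresonant dichotomy saves nothing. Likewise, fixing $(m,m')$ with $mm'\le x^2/y^2$ gives no saving, since there are far more than $x$ such pairs.

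The missing idea is a second case split on the size of $d=(P+1,Q+1)$ relative to an auxiliary parameter $z$, with a different saving mechanism on each side.

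\emph{Case $d>z$.} Since $PQuv\le 2x^2$, one of two things holds. If $PQ\le xw$, summing $xw/(PQ)$ over $P\equiv Q\equiv-1\pmod d$ gives $xw(\log x)^2/z$. If instead $uv\le 2x/w$, Lemma~\ref{lem:diophantine} puts the free parameter in one residue class modulo $d$. The resonance $u=v+h$ then gives $\mathcal R_h$. The nonresonant terms with $duv\le x$ give $x(\log x)^2/z$. Those with $duv>x$ give $O_h(1)$ per triple, with at most $\tau(|u-v-h|)$ choices of $d$, so Lemma~\ref{lem:shifted-divisor-sum} bounds them by $x(\log x)^2/w$.

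\emph{Case $d\le z$.} Here either $Pv\ll x$ or $Qu\ll x$. In the first subcase, Lemma~\ref{lem:factor-allocation} shows that $D=(P+1)/d$ divides $\sigma(v)$ and exceeds $y/z$, so $P$ is fixed by $(d,D)$. The count is then $\ll x\sum_v v^{-1}\sum_{d\le z}d^{-1}\sum_{D\mid\sigma(v),\,D>y/z}D^{-1}\ll (xz\log x/y)\sum_v\tau(\sigma(v))/v$. By \eqref{eq:partial-sum-tau-sigma} this is at most $(xz/y)\exp(O(\sqrt{\log x/\log\log x}))$. The subcase $Qu\ll x$ is analogous, using $(Q+1)/(dd_3)\mid\sigma(u)$ with $d_3\mid k\ll_h\log\log x$.

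The exponent $1/3$ is therefore not forced by $p,q>y$. It comes from balancing the three losses $w/z$, $1/w$, $z/y$, which gives $w=z^{1/2}$ and $z=y^{2/3}$.
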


The proof below first removes the cases in which $n$ or $n+h$ is $y$-smooth, together with a large-prime-square exceptional set. Writing
\[
P=P^+(n),\qquad Q=P^+(n+h),\qquad d=(P+1,Q+1),
\]
we then split according to $d>z$ or $d\le z$. In the range $d>z$, Lemma~\ref{lem:diophantine} isolates the unique resonance $u=v+h$, which produces the regular family $\mathcal R_h$; the remaining configurations are estimated directly. The range $d\le z$ is controlled using Lemma~\ref{lem:factor-allocation} and the average estimate for $\tau(\sigma(n))$, after which the parameters $w,z,y$ are chosen to balance the resulting error terms.

\begin{proof}
Let $x$ be sufficiently large in terms of $h$, and introduce parameters
\[
2h<z<y<x,\qquad 1<w<z,
\]
to be chosen below.

We first discard those $n\le x$ for which either $P^+(n)\le y$ or $P^+(n+h)\le y$. By \eqref{eq:smooth}, their number is
\begin{equation}\label{eq:smooth-pruned}
\ll_h x\exp\{-(1+o(1))u\log u\},
\qquad
u=\frac{\log x}{\log y}.
\end{equation}
We next discard the integers for which $p^2\mid n(n+h)$ for some prime $p>y$. Since $p>h$, such a prime cannot divide both $n$ and $n+h$. For fixed $p$, there are at most two relevant residue classes modulo $p^2$. If either class occurs then $p^2\le x+h$, so its contribution is $O_h(x/p^2)$. The total number removed is therefore $O_h(x/y)$.

For every remaining $n$, put
\[
P=P^+(n),\qquad Q=P^+(n+h),
\]
and write
\[
n=Pu,\qquad n+h=Qv.
\]
Then $P,Q>y>h$. Both occur to the first power, so
\[
(P,u)=(Q,v)=1,
\qquad
P>P^+(u),\quad Q>P^+(v).
\]
Moreover, $P\ne Q$, since otherwise $P\mid h$. Hence
\begin{equation}\label{eq:main-dioph}
Qv-Pu=h.
\end{equation}
If $n\in\mathcal B_h$, write
\[
\sigma(n+h)=k\sigma(n),
\qquad k\in\N.
\]
By multiplicativity,
\begin{equation}\label{eq:main-sigma}
(Q+1)\sigma(v)=k(P+1)\sigma(u).
\end{equation}
Using $\sigma(n)\ge n$ and \eqref{eq:abundancy-bound}, we have
\begin{equation}\label{eq:k-bound}
\begin{aligned}
k
&=\frac{\sigma(n+h)}{\sigma(n)}
\le\frac{\sigma(n+h)}{n}
=\frac{n+h}{n}\,I(n+h)
\ll_h\log\log x.
\end{aligned}
\end{equation}

Put
\[
d=(P+1,Q+1),\qquad A=\frac{Q+1}{d},\qquad B=\frac{P+1}{d}.
\]
Then $(A,B)=1$, and \eqref{eq:main-sigma} becomes $A\sigma(v)=B\sigma(u)k$. Lemma~\ref{lem:factor-allocation} gives positive integers $d_2,d_3,d_4,d_5$ such that
\begin{equation}\label{eq:factor-allocation}
A=d_2d_3,\qquad
\sigma(u)=d_2d_4,\qquad
\sigma(v)=Bd_4d_5,\qquad
k=d_3d_5.
\end{equation}
In particular, $d_3,d_5\ll_h\log\log x$.

We first treat $d>z$. Since $PQuv=n(n+h)\le x(x+h)\le2x^2$, either $PQ\le xw$ or $uv\le2x/w$.

Suppose $PQ\le xw$. For fixed distinct primes $P,Q$, the two congruences
\[
n\equiv0\pmod P,\qquad n\equiv-h\pmod Q
\]
determine one residue class modulo $PQ$. Thus the number of $n\le x$ is
\[
\ll1+\frac{x}{PQ}\ll\frac{xw}{PQ}.
\]
Write $P=d\ell-1$ and $Q=dm-1$. Since $P,Q>y$,
\[
\frac1P\ll\frac1{d\ell},\qquad \frac1Q\ll\frac1{dm}.
\]
Summing without imposing primality gives
\begin{equation}\label{eq:large-d-PQ}
\ll xw\sum_{d>z}\frac1{d^2}
\left(\sum_{\ell\ll x}\frac1\ell\right)
\left(\sum_{m\ll x}\frac1m\right)
\ll\frac{xw(\log x)^2}{z}.
\end{equation}

We may therefore assume $uv\le2x/w$. Apply Lemma~\ref{lem:diophantine} to \eqref{eq:main-dioph}. For fixed $u,v$, put $g=(u,v)$, $a=u/g$, $b=v/g$. Since $g\mid h$, the parameter $\lambda$ in \eqref{eq:dioph-param} is restricted by $P\ll x/u$ and $Q\ll_h x/v$ to an interval of length $O_h(x/(uv))$. The congruences $P\equiv Q\equiv-1\pmod d$ restrict $\lambda$ to at most one residue class modulo $d$. Consequently, for fixed $u,v,d$, there are
\begin{equation}\label{eq:lambda-count}
\ll_h1+\frac{x}{duv}
\end{equation}
possibilities. Moreover,
\begin{equation}\label{eq:d-divides-delta}
d\mid u-v-h.
\end{equation}

The unique resonant case in \eqref{eq:d-divides-delta} is
\begin{equation}\label{eq:resonance}
u=v+h.
\end{equation}
Put $g=(v,h)$, $a=v/g$, and $b=(v+h)/g$. Then \eqref{eq:main-dioph} becomes
\[
aQ-bP=b-a,
\]
or $a(Q+1)=b(P+1)$. Since $(a,b)=1$, there is $L\ge1$ such that
\begin{equation}\label{eq:resonant-primes}
P=aL-1,\qquad Q=bL-1.
\end{equation}
Equation \eqref{eq:main-sigma} now gives
\begin{equation}\label{eq:resonant-sigma}
(v+h)\sigma(v)=k\,v\sigma(v+h).
\end{equation}
Thus $v\in\mathcal V_h$. The case $v=1$ cannot occur, since \eqref{eq:resonant-sigma} would imply $h+1=k\sigma(h+1)$.

Since $P$ and $Q$ occur to the first power as the largest prime factors of $n$ and $n+h$, respectively,
\[
(P,v+h)=1,\qquad (Q,v)=1.
\]
Thus every resonant solution surviving the preceding exclusions belongs to $\mathcal R_h$.

Conversely, suppose $n\in\mathcal R_h$. From \eqref{eq:regular-n},
\[
n+h=vq_{v,L}.
\]
Using \eqref{eq:cross-coprime} and multiplicativity,
\[
\sigma(n)=a_vL\,\sigma(v+h),
\qquad
\sigma(n+h)=b_vL\,\sigma(v).
\]
Therefore
\[
\frac{\sigma(n+h)}{\sigma(n)}
=
\frac{(v+h)\sigma(v)}{v\sigma(v+h)}
\in\N,
\]
and hence $\mathcal R_h\subseteq\mathcal B_h$.

We next estimate $R_h(x)$. For fixed $v\in\mathcal V_h$, condition \eqref{eq:regular-n} gives
\begin{equation}\label{eq:L-bound}
L\le\frac{\delta_vx}{v(v+h)}+\frac{\delta_v}{v}
\ll_h\frac{x}{v(v+h)}+1.
\end{equation}
Every relevant $v$ satisfies $v\ll_h\sqrt x$. Indeed, if $v>2h$, then $\delta_v\le h$, and therefore
\[
p_{v,L}\ge a_v-1\ge\frac vh-1\ge\frac{v}{2h}.
\]
Thus $x\ge(v+h)p_{v,L}\gg_hv^2$.

For $v\le x^{1/4}$, the upper bound in \eqref{eq:L-bound} is $\gg_hx^{1/2}$, so its logarithm is comparable with $\log x$. Lemma~\ref{lem:two-linear-sieve} therefore gives
\[
\ll_h
\frac{\log\log(3v)}{(\log x)^2}
\left(\frac{x}{v(v+h)}+1\right)
\]
possible values of $L$. Summing over $v\le x^{1/4}$ and using
\[
\sum_{v\ge2}\frac{\log\log(3v)}{v^2}<\infty
\]
gives $O_h(x/(\log x)^2)$. For $x^{1/4}<v\ll_h\sqrt x$, we ignore primality and use \eqref{eq:L-bound}, obtaining
\[
\ll_h x\sum_{v>x^{1/4}}\frac1{v^2}+\sqrt x\ll_hx^{3/4}.
\]
Hence
\begin{equation}\label{eq:Rh-bound}
R_h(x)\ll_h\frac{x}{(\log x)^2}.
\end{equation}

It remains to estimate the nonresonant part of the range $d>z$. Put $\Delta=u-v-h\ne0$.

First suppose $duv\le x$. The $1$ in \eqref{eq:lambda-count} is then absorbed, and the number of parameters is $O_h(x/(duv))$. Since $d\mid\Delta$, write $|\Delta|=jd$, $j\ge1$.
If $\Delta>0$, then $u=v+h+jd\ge jd$, whence
\[
\frac{x}{duv}\le\frac{x}{jd^2v}.
\]
Summing over $d>z$, $j$, and $v$ gives $\ll_hx(\log x)^2/z$. If $\Delta<0$, then $v=u+jd-h$. Since $d>z>2h$, we have $v\ge jd/2$, and the same argument again gives $O_h(x(\log x)^2/z)$.

If $duv>x$, \eqref{eq:lambda-count} gives $O_h(1)$ possibilities for every triple $u,v,d$. For fixed $u,v$, the number of admissible $d$'s is at most $\tau(|u-v-h|)$. Since $uv\le2x/w$, Lemma~\ref{lem:shifted-divisor-sum} gives a total contribution $\ll_hx(\log x)^2/w$. Together with \eqref{eq:large-d-PQ}, the nonregular contribution from $d>z$ is therefore
\begin{equation}\label{eq:large-d-nonregular}
\ll_h
\frac{xw(\log x)^2}{z}
+
\frac{x(\log x)^2}{z}
+
\frac{x(\log x)^2}{w}.
\end{equation}

We turn to the range $d\le z$. Since
\[
(Pv)(Qu)=PQuv=n(n+h)\le2x^2,
\]
we have
\[
\min\{Pv,Qu\}\le\sqrt2\,x\ll x.
\]
Thus every such solution lies in at least one of the two subranges
\[
Pv\ll x
\qquad\text{or}\qquad
Qu\ll x.
\]

We first bound the contribution from the subrange $Pv\ll x$. Then $(P,v)=1$; otherwise $P\mid n$ and $P\mid n+h$, contrary to $P>h$. For fixed $P,v$, the solutions of \eqref{eq:main-dioph} are parametrized by
\[
Q=Q_0+P\nu,\qquad u=u_0+v\nu,
\]
and $Pu\le x$ gives $O_h(x/(Pv))$ relevant parameters. Put
\[
D=\frac{P+1}{d}=B.
\]
Then $P=dD-1\asymp dD$. By \eqref{eq:factor-allocation},
$D\mid\sigma(v)$, while $P>y$ and $d\le z$ give $D>y/z$. Since $P=dD-1$ is determined by $d$ and $D$, this contribution is
\[
\ll_h
x\sum_{v\le2x}\frac1v
\sum_{d\le z}\frac1d
\sum_{\substack{D\mid\sigma(v)\\D>y/z}}\frac1D.
\]
The innermost sum is at most $(z/y)\tau(\sigma(v))$. Hence this range contributes
\begin{equation}\label{eq:small-d-first}
\ll_h
\frac{xz\log x}{y}
\sum_{v\le2x}\frac{\tau(\sigma(v))}{v}.
\end{equation}

We next bound the contribution from the subrange $Qu\ll x$. Then $(Q,u)=1$. For fixed $Q,u$,
\[
P=P_0+Q\nu,\qquad v=v_0+u\nu,
\]
and the parameter count is $O_h(x/[u(Q+1)])$. By \eqref{eq:factor-allocation},
\[
\frac{Q+1}{d}=d_2d_3,
\qquad\text{so that}\qquad
Q+1=d\,d_2d_3.
\]
Put
\[
D=\frac{Q+1}{dd_3}=d_2.
\]
Then $D\mid\sigma(u)$ and $D>y/(zd_3)$. Thus the contribution is
\begin{equation}\label{eq:small-d-second}
\ll_h
x\sum_{u\le2x}\frac1u
\sum_{d\le z}\frac1d
\sum_{d_3\ll_h\log\log x}\frac1{d_3}
\sum_{\substack{D\mid\sigma(u)\\D>y/(zd_3)}}\frac1D.
\end{equation}
The innermost sum is at most $(zd_3/y)\tau(\sigma(u))$, which cancels the factor $1/d_3$ in \eqref{eq:small-d-second}. Since there are $O_h(\log\log x)$ possible positive values of $d_3$, \eqref{eq:small-d-second} is
\begin{equation}\label{eq:small-d-second-simplified}
\ll_h
\frac{xz(\log x)(\log\log x)}{y}
\sum_{u\le2x}\frac{\tau(\sigma(u))}{u}.
\end{equation}
Using \eqref{eq:partial-sum-tau-sigma} in \eqref{eq:small-d-first} and \eqref{eq:small-d-second-simplified}, and absorbing powers of $\log x$ and $\log\log x$ into the exponential, we obtain
\begin{equation}\label{eq:small-d-total}
\#\{n\le x:d\le z\}
\ll_h
\frac{xz}{y}
\exp\!\left((C_0+o(1))\sqrt{\frac{\log x}{\log\log x}}\right).
\end{equation}

We choose $w$ and $z$ to balance the leading algebraic losses in
\eqref{eq:large-d-nonregular} and \eqref{eq:small-d-total}: first
$w/z=1/w$, and then $z^{-1/2}=z/y$. Thus
\[
w=z^{1/2},\qquad z=y^{2/3}.
\]
Combining \eqref{eq:smooth-pruned}, the large-prime-square estimate, \eqref{eq:large-d-nonregular}, and \eqref{eq:small-d-total} gives
\begin{equation}\label{eq:aggregate}
\begin{aligned}
B_h(x)-R_h(x)
\ll_h{}&
x\exp\{-(1+o(1))u\log u\}
+\frac xy
+\frac{x(\log x)^2}{y^{1/3}}\\
&+\frac{x}{y^{1/3}}
\exp\!\left((C_0+o(1))\sqrt{\frac{\log x}{\log\log x}}\right).
\end{aligned}
\end{equation}
Put $L_1=\log x$, $L_2=\log\log x$, and choose
\[
\log y=\sqrt{\frac32L_1L_2}.
\]
Then
\[
u=\sqrt{\frac23\,\frac{L_1}{L_2}},
\qquad
u\log u=
\left(\frac1{\sqrt6}+o(1)\right)\sqrt{L_1L_2},
\]
whereas
\[
\frac13\log y=\frac1{\sqrt6}\sqrt{L_1L_2}.
\]
Finally,
\[
\sqrt{\frac{L_1}{L_2}}=o(\sqrt{L_1L_2}),
\]
so the exponential factor in \eqref{eq:small-d-total} affects only the lower-order term in the exponent. Thus \eqref{eq:aggregate} yields \eqref{eq:regular-remainder}. Together with \eqref{eq:Rh-bound}, this completes the proof.
\end{proof}

\subsection{Consequences and affine equalities}

\begin{corollary}\label{cor:divisibility-bound}
For every fixed $h\ge1$,
\[
B_h(x)\ll_h\frac{x}{(\log x)^2}.
\]
\end{corollary}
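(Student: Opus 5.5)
The corollary follows directly from Theorem~\ref{thm:regular-decomposition}. I would split
\[
B_h(x)=R_h(x)+\bigl(B_h(x)-R_h(x)\bigr)
\]
and bound the two pieces separately.

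For the first piece, the theorem gives $R_h(x)\ll_h x/(\log x)^2$. For the second, \eqref{eq:regular-remainder} gives
\[
B_h(x)-R_h(x)\le x\exp\!\left(-\left(\tfrac1{\sqrt6}+o_h(1)\right)\sqrt{\log x\,\log\log x}\right).
\]
What remains is to show that this subexponential term is $O(x/(\log x)^2)$. Once $x$ is large enough in terms of $h$, the factor $\tfrac1{\sqrt6}+o_h(1)$ is at least $\tfrac1{2\sqrt6}$. Then
\[
\tfrac1{2\sqrt6}\sqrt{\log x\,\log\log x}\ \ge\ 2\log\log x
\]
for all large $x$, because $\sqrt{\log x\,\log\log x}/\log\log x=\sqrt{\log x/\log\log x}\to\infty$. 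Hence the remainder is at most $x/(\log x)^2$ beyond an $h$-dependent threshold $x_1(h)$.

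For the finitely many $x\le x_1(h)$ there is nothing to prove: the trivial bound $B_h(x)\le x$ is absorbed into the implied constant, which is allowed to depend on $h$. Adding the two pieces gives the corollary. There is no real obstacle here; the only point needing care is that the $o_h(1)$ term makes the threshold depend on $h$, which the $\ll_h$ notation permits.
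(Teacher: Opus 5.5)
Your proposal is correct and matches the paper's proof: you write $B_h(x)=R_h(x)+\bigl(B_h(x)-R_h(x)\bigr)$, use Theorem~\ref{thm:regular-decomposition} for both pieces, and observe that the subexponential remainder is $o(x/(\log x)^2)$. Your explicit verification of that last step, and your treatment of the $h$-dependent threshold, only spell out what the paper leaves implicit. One small wording fix: below the threshold you are dealing with a bounded range of $x$, not finitely many values of $x$, and the trivial bound $B_h(x)\le x$ handles it just as you say.
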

\begin{proof}
By Theorem~\ref{thm:regular-decomposition},
\[
B_h(x)
\le
R_h(x)
+
x\exp\!\left(
-\left(\frac1{\sqrt6}+o_h(1)\right)
\sqrt{\log x\,\log\log x}
\right).
\]
The second term is $o(x/(\log x)^2)$, while $R_h(x)\ll_hx/(\log x)^2$.
\end{proof}

\begin{corollary}\label{cor:reciprocal-sum}
For every fixed $h\ge1$, $\sum_{n\in\mathcal B_h}\frac1n<\infty$.
Consequently, for every $k\in\N$,
\[
\sum_{\substack{n\ge1\\ \sigma(n+h)=k\sigma(n)}}\frac1n<\infty.
\]
\end{corollary}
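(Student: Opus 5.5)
The plan is to deduce convergence of the reciprocal sum from Corollary~\ref{cor:divisibility-bound} by partial summation. The second assertion is then immediate: every solution of $\sigma(n+h)=k\sigma(n)$ with $k\in\N$ satisfies $\sigma(n)\mid\sigma(n+h)$, so the set of such $n$ lies in $\mathcal B_h$. Its reciprocal sum is therefore bounded by $\sum_{n\in\mathcal B_h}1/n$.

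For the first assertion, fix $h\ge1$. By Corollary~\ref{cor:divisibility-bound} there is a constant $C_h$ and a threshold $x_1(h)\ge3$ with
\[
B_h(t)\le C_h\,\frac{t}{(\log t)^2}
\]
for $t\ge x_1(h)$. For $X\ge x_1(h)$, Abel summation with the counting function $B_h$ gives
\[
\sum_{\substack{n\in\mathcal B_h\\ n\le X}}\frac1n
=\frac{B_h(X)}{X}+\int_1^X\frac{B_h(t)}{t^2}\,dt .
\]
The boundary term is at most $C_h/(\log X)^2\le C_h$. In the integral, the part over $[1,x_1(h)]$ is bounded by $x_1(h)$, using the trivial bound $B_h(t)\le t$. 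The remaining part is bounded by
\[
C_h\int_{x_1(h)}^\infty\frac{dt}{t(\log t)^2}=\frac{C_h}{\log x_1(h)} .
\]
The partial sums are thus bounded uniformly in $X$. Since all terms are positive, the series converges.

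No step here is a genuine obstacle. The only point needing care is that the corollary's bound is asymptotic, which is why small $t$ is handled separately by the trivial estimate. It is also worth noting that the argument needs nothing beyond a saving of $(\log t)^{1+\varepsilon}$ over $t$. The exponent $2$ coming from the two-linear-form sieve in Lemma~\ref{lem:two-linear-sieve} is therefore more than sufficient, and the subexponential remainder in Theorem~\ref{thm:regular-decomposition} plays no essential role.
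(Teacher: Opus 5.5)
Your proposal is correct and follows essentially the same route as the paper: partial summation against the bound $B_h(t)\ll_h t/(\log t)^2$ from Corollary~\ref{cor:divisibility-bound}, then the inclusion of each level set $\{n:\sigma(n+h)=k\sigma(n)\}$ in $\mathcal B_h$. Your separate handling of small $t$ through the trivial bound $B_h(t)\le t$ is a sound extra precaution, which the paper simply absorbs into its $\ll_h 1$ term.
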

\begin{proof}
By Corollary~\ref{cor:divisibility-bound}, partial summation gives, for $X\ge3$,
\[
\sum_{\substack{n\le X\\ n\in\mathcal B_h}}\frac1n
=
\frac{B_h(X)}{X}
+
\int_1^X\frac{B_h(t)}{t^2}\,dt
\ll_h
1+\int_3^\infty\frac{dt}{t(\log t)^2}
<\infty.
\]
Letting $X\to\infty$ proves the first assertion. The second follows from
\[
\{n\in\N:\sigma(n+h)=k\sigma(n)\}\subseteq\mathcal B_h.
\]
\end{proof}

For $h\in\Z\setminus\{0\}$ and $\lambda>0$, define
\[
\mathcal A_{h,\lambda}
=
\{n\in\N:n+h\ge1,\ \sigma(n+h)=\lambda\sigma(n)\},
\]
and
\[
A_{h,\lambda}(x)=\#(\mathcal A_{h,\lambda}\cap[1,x]).
\]
This notation will also be used in Section~\ref{sec:levelsets}.

\begin{corollary}\label{cor:integer-multiplier}
For every fixed $h\ge1$,
\[
\sup_{k\in\N}A_{h,k}(x)\ll_h\frac{x}{(\log x)^2}.
\]
\end{corollary}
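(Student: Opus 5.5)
The plan is to deduce the statement directly from Corollary~\ref{cor:divisibility-bound}, using a set inclusion that holds for every $k$ at once. Fix $h\ge1$ and $k\in\N$. Since $h>0$, the condition $n+h\ge1$ in the definition of $\mathcal A_{h,k}$ holds automatically. If $n\in\mathcal A_{h,k}$, then $\sigma(n+h)=k\sigma(n)$ with $k$ a positive integer. Hence $\sigma(n)\mid\sigma(n+h)$, so $n\in\mathcal B_h$. This gives
\[
\mathcal A_{h,k}\subseteq\mathcal B_h
\qquad(k\in\N),
\]
and therefore $A_{h,k}(x)\le B_h(x)$ for every $x$ and every $k\in\N$.

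Next I take the supremum over $k$. The right-hand side $B_h(x)$ does not depend on $k$, so
\[
\sup_{k\in\N}A_{h,k}(x)\le B_h(x)\ll_h\frac{x}{(\log x)^2},
\]
by Corollary~\ref{cor:divisibility-bound}. The implied constant depends only on $h$, because it is the constant in that corollary, which comes from Theorem~\ref{thm:regular-decomposition}.

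I expect no real obstacle here. The only point needing care is uniformity in $k$, and it holds automatically because the inclusion $\mathcal A_{h,k}\subseteq\mathcal B_h$ is pointwise and the bound is applied to the single set $\mathcal B_h$. We could also observe, via \eqref{eq:k-bound}, that only $k\ll_h\log\log x$ can contribute at all, but the argument does not need this.
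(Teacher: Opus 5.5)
Your proposal is correct and follows the paper's own proof: the pointwise inclusion $\mathcal A_{h,k}\subseteq\mathcal B_h$ gives $A_{h,k}(x)\le B_h(x)$ for every $k\in\N$. Taking the supremum over $k$ of this $k$-independent bound, together with Corollary~\ref{cor:divisibility-bound}, yields the result.
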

\begin{proof}
If $k\in\N$ and $\sigma(n+h)=k\sigma(n)$, then $\sigma(n)\mid\sigma(n+h)$. Hence
\[
A_{h,k}(x)\le B_h(x)
\]
for every $k\in\N$. The bound in Corollary~\ref{cor:divisibility-bound} is independent of $k$, so taking the supremum over $k\in\N$ gives the result.
\end{proof}

\begin{corollary}\label{cor:dichotomy}
Let $h\ge1$ be fixed.

\textup{(i)} If $\mathcal V_h=\varnothing$, then
\[
B_h(x)
\le
x\exp\!\left(
-\left(\frac1{\sqrt6}+o_h(1)\right)
\sqrt{\log x\,\log\log x}
\right).
\]

\textup{(ii)} If $\mathcal V_h\ne\varnothing$, then, assuming the Bateman--Horn conjecture,
\[
B_h(x)\asymp_h\frac{x}{(\log x)^2}.
\]
\end{corollary}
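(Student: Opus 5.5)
Part (i) follows immediately from Theorem~\ref{thm:regular-decomposition}. If $\mathcal V_h=\varnothing$, then $\mathcal R_h$ is empty by its definition, because every element of $\mathcal R_h$ has the form $(v+h)p_{v,L}$ with $v\in\mathcal V_h$. So $R_h(x)=0$, and \eqref{eq:regular-remainder} gives exactly the stated bound for $B_h(x)$.

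For part (ii), Corollary~\ref{cor:divisibility-bound} already gives the upper bound $B_h(x)\ll_h x/(\log x)^2$ unconditionally, so only the lower bound needs the conjecture. I would fix one element $v\in\mathcal V_h$ and write $a=a_v$, $b=b_v$. These satisfy $(a,b)=1$, $a\ne b$ and $b-a=h/\delta_v$. I would consider the pair of linear forms $f_1(L)=aL-1$ and $f_2(L)=bL-1$, which is the pair whose local densities were computed in the proof of Lemma~\ref{lem:two-linear-sieve}. By \eqref{eq:rho}, $\rho(p)\le 2$ for every $p$ and $\rho(p)<p$ for all $p$, since $2\mid ab(a-b)$. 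So the pair has no fixed prime divisor, the singular series $\mathfrak S_v=\prod_p(1-\rho(p)/p)(1-1/p)^{-2}$ is positive, and Bateman--Horn gives
\[
\#\{L\le T: aL-1,\ bL-1 \text{ both prime}\}\sim \mathfrak S_v\frac{T}{(\log T)^2}.
\]

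I then need to turn these $L$ into distinct elements of $\mathcal R_h\cap[1,x]$. Set $n=(v+h)(aL-1)$. We have $n\le x$ as soon as $L\le T:=x/((v+h)a)$, and $v$ is fixed, so $\log T\sim\log x$. The cross-coprimality conditions \eqref{eq:cross-coprime} fail only when the prime $aL-1$ divides $v+h$ or the prime $bL-1$ divides $v$. Both divisors are fixed, so this happens for only $O_v(1)$ values of $L$, namely those with $aL-1\le v+h$ or $bL-1\le v$. Distinct $L$ give distinct $n$, since $n$ is strictly increasing in $L$. This yields $R_h(x)\ge(\mathfrak S_v+o(1))\,x/((v+h)a(\log x)^2)-O_v(1)$. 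Since $\mathcal R_h\subseteq\mathcal B_h$, we get $B_h(x)\gg_h x/(\log x)^2$, where the constant depends on $h$ through the chosen $v$.

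I do not expect any serious obstacle. The one point needing care is to confirm the Bateman--Horn hypotheses. Each form is irreducible with positive leading coefficient, and the product $(aL-1)(bL-1)$ has no fixed prime divisor, as checked above using $\rho(p)<p$. We also need $\mathfrak S_v>0$, which holds because the Euler product converges and no local factor vanishes.
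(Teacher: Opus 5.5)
Your proposal is correct and follows essentially the same route as the paper. Part (i) uses $\mathcal R_h=\varnothing$ together with Theorem~\ref{thm:regular-decomposition}. Part (ii) fixes one $v\in\mathcal V_h$, checks admissibility of $aL-1,\ bL-1$ through $\rho(p)\le 2$ and the evenness of $ab(a-b)$, applies Bateman--Horn, discards the finitely many $L$ violating \eqref{eq:cross-coprime}, and takes the upper bound from Corollary~\ref{cor:divisibility-bound}.
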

\begin{proof}
If $\mathcal V_h=\varnothing$, then $\mathcal R_h=\varnothing$, and part~(i) follows immediately from Theorem~\ref{thm:regular-decomposition}.

For part~(ii), fix once and for all $v_0\in\mathcal V_h$, and put
\[
g_0=(v_0,h),\qquad a_0=\frac{v_0}{g_0},\qquad b_0=\frac{v_0+h}{g_0}.
\]
Consider
\[
f_1(L)=a_0L-1,\qquad f_2(L)=b_0L-1.
\]
These two linear polynomials form an admissible system. Indeed, for a prime $p$, the number $\rho(p)$ of roots of $f_1f_2$ modulo $p$ equals $1$ if $p\mid a_0b_0(b_0-a_0)$ and $2$ otherwise. Since $a_0b_0(b_0-a_0)$ is even, $\rho(2)=1<2$, while $\rho(p)\le2<p$ for every $p\ge3$.

The Bateman--Horn conjecture therefore gives
\[
\#\{L\le T:f_1(L),f_2(L)\ \text{prime}\}
\sim
\mathfrak S_{v_0,h}\frac{T}{(\log T)^2}
\]
for some $\mathfrak S_{v_0,h}>0$. For sufficiently large $L$, both primes exceed the fixed integers $v_0+h$ and $v_0$, so the two coprimality conditions in \eqref{eq:cross-coprime} hold automatically. Since $n=(v_0+h)(a_0L-1)\le x$ permits $L\asymp_{v_0,h}x$, we obtain
\[
R_h(x)\gg_{v_0,h}\frac{x}{(\log x)^2}.
\]
Having fixed $v_0$ for this $h$, we may absorb the dependence on $v_0$
into the $h$-dependent implied constant. Hence
\[
B_h(x)\gg_h\frac{x}{(\log x)^2}.
\]
The reverse inequality follows from Corollary~\ref{cor:divisibility-bound}.
\end{proof}

We use the Bateman--Horn conjecture in its usual prime-values form
\cite{BatemanHorn1962}.

\begin{remark}\label{rem:V22}
The exceptional set need not be empty. Indeed,
\[
I(6)=I(28)=2,
\]
so
\[
6\in\mathcal V_{22}.
\]
For this pair, $\delta_6=2$, and the associated forms are $3L-1$ and $14L-1$. The corresponding shifted-equality family $n=28(3L-1)$ also occurs in Yamada's construction
\cite{Yamada2017}.
\end{remark}

Let $q\ge1$ and let $r_1,r_2\in\Z$ be distinct. For $x\ge1$, define
\[
N_{q,r_1,r_2}(x)
=
\#\left\{
 n\in\N:
 \begin{array}{l}
 n\le x,\\
 qn+r_1\ge1,\ qn+r_2\ge1,\\
 \sigma(qn+r_1)=\sigma(qn+r_2)
 \end{array}
\right\}.
\]

\begin{corollary}\label{cor:parallel}
Put $h=|r_1-r_2|$. Then
\[
N_{q,r_1,r_2}(x)\ll_{q,h}\frac{x}{(\log x)^2}.
\]
The implied multiplicative constant may be chosen to depend only on $q$ and $h$; the threshold may also depend on $\min(r_1,r_2)$.
\end{corollary}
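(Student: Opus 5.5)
Without loss of generality take $r_1<r_2$, so $r_2-r_1=h$; the equation $\sigma(qn+r_1)=\sigma(qn+r_2)$ is symmetric, so swapping the labels changes nothing. Put $m=qn+r_1$. Then $m+h=qn+r_2$, and every $n$ counted by $N_{q,r_1,r_2}(x)$ gives a positive integer $m$ with $\sigma(m+h)=\sigma(m)$. This is exactly the level $k=1$ of the integer-multiplier problem, so $m\in\mathcal B_h$. The map $n\mapsto m$ is injective because $q\ge1$. Since $qn+r_1\ge1$, we have
\[
m\le qx+r_1\le qx+|r_1|.
\]

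From this we get
\[
N_{q,r_1,r_2}(x)\le A_{h,1}(qx+|r_1|)\le B_h(qx+|r_1|).
\]
Corollary~\ref{cor:integer-multiplier} (or directly Corollary~\ref{cor:divisibility-bound}) gives
\[
B_h(X)\le C_h\,\frac{X}{(\log X)^2}
\]
for $X\ge X_0(h)$. Take $x$ so large that $|r_1|\le qx$, which is possible once $x\ge |r_1|/q$. Then $X:=qx+|r_1|$ satisfies $qx\le X\le 2qx$, so $\log X\ge\log x$, and $X\ge X_0(h)$ for $x$ large in terms of $h$. This yields
\[
N_{q,r_1,r_2}(x)\le 2qC_h\,\frac{x}{(\log x)^2},
\]
with a constant depending only on $q$ and $h$. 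The threshold depends on $h$ and on $|r_1|=|\min(r_1,r_2)|$, as the statement allows. Smaller values of $x$, up to any fixed threshold, do not affect the asymptotic bound with this convention.

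The only point needing care is the bookkeeping of uniformity: making sure the shift $r_1$ enters only through the threshold and not through the multiplicative constant. That is why I compare $X$ with $qx$ only once $x\ge|r_1|/q$, rather than absorbing $r_1$ into the constant. The divisibility input itself is already supplied by the earlier corollaries.
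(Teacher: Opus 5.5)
Your proof is correct and follows essentially the same route as the paper: the substitution $m=qn+\min(r_1,r_2)$ maps the solution set injectively into $\mathcal B_h\cap[1,qx+O(1)]$, and Corollary~\ref{cor:divisibility-bound} then gives the bound. Your explicit step $qx\le X\le 2qx$ for $x\ge|r_1|/q$ simply spells out the paper's remark that $q\lfloor x\rfloor+s=qx+O_{q,s}(1)$, which keeps $r_1$ out of the multiplicative constant and affects only the threshold.
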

\begin{proof}
Put $s=\min(r_1,r_2)$. The two affine arguments are $qn+s$ and $qn+s+h$, and both are positive precisely when $qn+s\ge1$. For every admissible $n$, set $m=qn+s$. Then $m\ge1$, and
\[
\{qn+r_1,qn+r_2\}=\{m,m+h\}.
\]
Hence $\sigma(m)=\sigma(m+h)$, and in particular $m\in\mathcal B_h$. The map $n\mapsto m$ is injective, and $n\le x$ implies $m\le q\lfloor x\rfloor+s$. Thus, whenever the argument on the right is positive,
\begin{equation}\label{eq:parallel-reduction}
N_{q,r_1,r_2}(x)\le B_h(q\lfloor x\rfloor+s).
\end{equation}
If that argument is nonpositive, the left-hand side is zero. Corollary~\ref{cor:divisibility-bound} applied to \eqref{eq:parallel-reduction}, together with $q\lfloor x\rfloor+s=qx+O_{q,s}(1)$, gives the result.
\end{proof}

For comparison, Yamada's Theorems~1.1--1.2 distinguish solutions arising
from an explicit prime-form construction from the remaining solutions
\cite{Yamada2017}. In the parallel case, Corollary~\ref{cor:parallel}
is consistent with this framework and follows here directly from
Corollary~\ref{cor:divisibility-bound}.
\section{Uniform bounds for shifted proportionality relations}
\label{sec:levelsets}

Recall from Section~\ref{sec:divisibility} that, for $h\in\Z\setminus\{0\}$ and $\lambda>0$,
\[
\mathcal A_{h,\lambda}
=
\{n\in\N:n+h\ge1,\ \sigma(n+h)=\lambda\sigma(n)\},
\]
and
\[
A_{h,\lambda}(x)=\#(\mathcal A_{h,\lambda}\cap[1,x]).
\]
We prove a bound that is uniform in $\lambda$. The argument
truncates the additive function $\log(\sigma(n)/n)$, exploits the
periodicity of the truncated difference, and identifies its distribution
on a complete period exactly with a product distribution furnished by
the Chinese remainder theorem. The resulting local variables are then
treated using the concentration inequality of Theorem~\ref{thm:KR}.

\begin{theorem}\label{thm:uniform-levelset}
There exists an absolute constant $C>0$ such that, for every nonzero integer $h$, there exists $x_0(h)$ with the property that,
for every $x\ge x_0(h)$,
\[
\sup_{\lambda>0}A_{h,\lambda}(x)
\le
C\frac{x}{\sqrt{\log\log\log x}}.
\]
Consequently, for every fixed $h\ne0$,
\[
\sup_{\lambda>0}\frac{A_{h,\lambda}(x)}x\longrightarrow0
\qquad(x\to\infty).
\]
The constant $C$ is absolute; only the threshold $x_0(h)$ may depend on the shift. The estimate therefore holds simultaneously for all real $\lambda>0$.
\end{theorem}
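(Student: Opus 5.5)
We follow the outline given in the introduction. Put $g(n)=\log I(n)=\sum_{p^a\| n}\log\bigl(\sigma(p^a)/p^a\bigr)$, so that $\sigma(n+h)=\lambda\sigma(n)$ is equivalent to
\[
g(n+h)-g(n)=\log\lambda-\log\frac{n+h}{n}.
\]
For $n>\sqrt x$ the last term is $O(|h|/\sqrt x)$, so for all but $O(\sqrt x)$ integers $n\le x$ the difference $g(n+h)-g(n)$ lies in a fixed interval of length $\ll |h|/\sqrt x\le\varepsilon$, whose position depends on $\lambda$. It therefore suffices to bound, uniformly in $a\in\mathbb R$, the number of $n\le x$ with $g(n+h)-g(n)\in[a,a+\varepsilon]$, where $\varepsilon$ is a small scale chosen below.

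\emph{Truncation.} Fix parameters $y$ (a prime cutoff) and $K$ (an exponent cap). Define $g_{y}(n)=\sum_{p\le y}\log\bigl(\sigma(p^{\min(v_p(n),K)})/p^{\min(v_p(n),K)}\bigr)$, and put $M=\prod_{p\le y}p^{K+1}$. Then $\Delta_y(n)=g_y(n+h)-g_y(n)$ is periodic modulo $M$.

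The truncation error has two sources.
\begin{itemize}
\item Primes $p>y$ contribute at most $\sum_{p>y,\,p\mid n(n+h)}\log(1+1/p+\cdots)\le\sum_{p\mid n(n+h),\,p>y}2/p$. On average over $n\le x$ this is $\ll\sum_{y<p\le 2x}1/p^2\cdot p\ll 1/y+\log\log x/\ldots$. This needs care, since $\sum_{y<p\le x}1/p\cdot(1/p)$ is small: the expected value is $\sum_p (2/p)\cdot(2/p)\ll 1/(y\log y)$.
\item Exponents above $K$ at primes $p\le y$ occur only when $p^{K+1}\mid n(n+h)$, which happens with density $\ll \sum_p 2/p^{K+1}\ll 2^{-K}$.
\end{itemize}
By Markov's inequality, outside a set of size $\ll x\bigl(1/(\varepsilon y)+2^{-K}\bigr)+O(\sqrt x)$ we have $|g(n+h)-g(n)-\Delta_y(n)|\le\varepsilon$. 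On the remaining $n$ the original condition forces $\Delta_y(n)\in[a-\varepsilon,a+2\varepsilon]$, an interval of length $3\varepsilon$. Choosing $y$ so that $M\le\sqrt x$ (for instance $y\asymp \log x/(K+1)$ via $\vartheta(y)\le2y$), the count over $n\le x$ equals $(x/M)\cdot M\Pr(\cdots)+O(M)$, where $\Pr$ is the uniform measure on $\Z/M\Z$.

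\emph{Product model and anti-concentration.} By the Chinese remainder theorem, the uniform measure on $\Z/M\Z$ is the product of uniform measures on $\Z/p^{K+1}\Z$ for $p\le y$. Hence $\Delta_y=\sum_{p\le y}X_p$ with independent summands
\[
X_p(r)=\log I_K\bigl(p^{v_p(r+h)}\bigr)-\log I_K\bigl(p^{v_p(r)}\bigr).
\]
Group the primes $p\mid h$ into a remainder $Y$; by Lemma~\ref{lem:convolution}, $Q_{3\varepsilon}(\Delta_y)\le Q_{3\varepsilon}\bigl(\sum_{p\nmid h}X_p\bigr)$.

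For $p\nmid h$, $p\le y$, the variable $X_p$ takes three values:
\begin{itemize}
\item $0$, with probability $1-2/p$;
\item approximately $\log(1+1/p)\asymp 1/p$, with probability $\gg 1/p$ (the event $p\,\|\,r+h$);
\item approximately $-1/p$, with probability $\gg 1/p$.
\end{itemize}
If $\varepsilon<1/(3p)$, then any interval of length $3\varepsilon$ misses two of these three atoms, so $1-Q_{3\varepsilon}(X_p)\gg 1/p$. Take $\varepsilon=1/(4y)$. Theorem~\ref{thm:KR} in its equal-scale form then gives
\[
Q_{3\varepsilon}(\Delta_y)\ll\Bigl(\sum_{|h|<p\le y}1/p\Bigr)^{-1/2}\ll(\log\log y)^{-1/2}
\]
once $y$ is large in terms of $h$.

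\emph{Parameter choice and main obstacle.} The error $x/(\varepsilon y)$ with $\varepsilon=1/(4y)$ is $x\cdot 4y/(y\log y)\cdot(\ldots)$, which requires care. The Markov bound for the large-prime tail has mean $\ll 1/(y\log y)$, so after dividing by $\varepsilon\asymp 1/y$ the error is $\ll x/\log y$, which is acceptable. Taking $K\asymp\log\log\log x$ and $y\asymp\log x/\log\log\log x$ gives a total bound
\[
\ll x\bigl((\log\log y)^{-1/2}+1/\log y+2^{-K}\bigr)+O(\sqrt x)\ll x/\sqrt{\log\log\log x},
\]
with absolute constants, valid once $x\ge x_0(h)$ (needed so that $y>|h|$ and the sum over $p\nmid h$ is dominated by $\log\log y$).

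The main obstacle is this balancing. The truncation tail must be small at a scale $\varepsilon$ that is smaller than every local atom $1/p$ with $p\le y$, while the period $M$ must stay below $x$. The tail estimate for primes $p>y$, namely a pointwise-in-$n$ Markov bound using $\sum_{p>y}1/p^2\ll 1/(y\log y)$ from Section~\ref{sec:preliminaries}, is the step I would verify first. If it turns out to be too weak, I would fall back on a dyadic grouping in which primes in $(y,y^2]$ are also kept in the model with $\varepsilon$ adjusted accordingly.
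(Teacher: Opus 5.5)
Your proposal is correct and follows essentially the paper's proof. The shared steps are: truncating $g$ at primes $p\le y$ and capped exponents; controlling the large-prime tail by Markov's inequality with mean $\ll 1/(y\log y)$ (Lemma~\ref{lem:large-prime-tail}); exact periodicity with the CRT product model; discarding the $p\mid h$ coordinates via Lemma~\ref{lem:convolution}; and equal-scale Kolmogorov--Rogozin at scale $\varepsilon\asymp 1/y$ with $y\asymp\log x/K$. The differences are cosmetic (discarding $n\le\sqrt x$ instead of $E_0$, $K\asymp\log\log\log x$ instead of $r=\lceil\log_2\log\log\log x\rceil$, and the lower bound $1-Q_{3\varepsilon}(X_p)\ge 1/p$ over $|h|<p\le y$ instead of the exact local law for $5\le p\le z$), though your first-bullet bound $x/(\varepsilon y)$ is useless at $\varepsilon\asymp 1/y$, so the tail must be handled with the $1/(y\log y)$ mean summed over all $m\le x+|h|$, as you eventually do.
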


\begin{remark}\label{rem:scope-levelsets}
For $h>0$ and $k\in\N$, Corollary~\ref{cor:integer-multiplier} gives the stronger estimate
\[
A_{h,k}(x)\ll_h\frac{x}{(\log x)^2}.
\]
Theorem~\ref{thm:uniform-levelset} has a different scope: it applies to every fixed nonzero integer shift, including negative shifts, and simultaneously to all real $\lambda>0$. Its proof is independent of the divisibility argument of Section~\ref{sec:divisibility}.
\end{remark}

\subsection{Truncation and exceptional sets}

Set
\[
g(m)=\log I(m)=\log\frac{\sigma(m)}m.
\]
For a prime power $p^a$, $a\ge1$, define
\[
\ell(p^a)
=
\log\left(1+\frac1p+\cdots+\frac1{p^a}\right)
=
\log\left(\frac{1-p^{-(a+1)}}{1-p^{-1}}\right),
\]
and put $\ell(1)=0$. Since
\[
\frac{\sigma(m)}m
=
\prod_{p^a\parallel m}\left(1+\frac1p+\cdots+\frac1{p^a}\right),
\]
we have
\[
g(m)=\sum_p\ell\!\left(p^{v_p(m)}\right).
\]
For every prime $p$ and $a\ge1$,
\[
0\le\ell(p^a)\le-\log(1-p^{-1})\le\frac2p,
\]
whereas
\[
\ell(p)=\log(1+p^{-1})\ge\frac1{2p}.
\]

Let $y\ge2$ and let $r\ge1$ be an integer. Define
\[
g_{y,r}(m)
=
\sum_{p\le y}\ell\!\left(p^{\min(v_p(m),r)}\right)
\]
and
\[
T_y(m)=\sum_{p>y}\ell\!\left(p^{v_p(m)}\right).
\]
If $v_p(m)\le r$ for every $p\le y$, then
\[
g(m)=g_{y,r}(m)+T_y(m).
\]
We first estimate the contribution from primes exceeding $y$.

\begin{lemma}\label{lem:large-prime-tail}
Uniformly for $X\ge1$ and $y\ge2$,
\[
\sum_{m\le X}T_y(m)\ll\frac{X}{y\log y},
\]
with an absolute implied constant.
\end{lemma}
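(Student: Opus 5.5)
The plan is to interchange the order of summation and estimate each prime's contribution separately. By definition,
\[
\sum_{m\le X}T_y(m)=\sum_{p>y}\sum_{m\le X}\ell\!\left(p^{v_p(m)}\right),
\]
and only those $m$ with $p\mid m$ contribute, since $\ell(1)=0$. For such $m$ we use the bound $\ell(p^a)\le 2/p$ recorded above. The number of $m\le X$ divisible by $p$ is at most $X/p$. Each prime $p>y$ therefore contributes at most $(2/p)\cdot(X/p)=2X/p^2$. Note that $p>X$ gives no contribution at all, so we may freely bound the count by $X/p$ without using the floor function.

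Summing over $p>y$ gives
\[
\sum_{m\le X}T_y(m)\le 2X\sum_{p>y}\frac1{p^2}\ll\frac{X}{y\log y},
\]
using the tail estimate $\sum_{p>y}p^{-2}\ll 1/(y\log y)$ from Section~\ref{sec:preliminaries}. That estimate is obtained by partial summation from $\vartheta(y)\le 2y$ or from $\pi(t)\ll t/\log t$.

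The only point that needs care is that the implied constant be absolute and uniform for all $y\ge2$. The preliminary tail estimate was stated with Chebyshev bounds that hold "for sufficiently large $y$". To cover small $y$ as well, I would note that for $2\le y\le y_0$ the left-hand side is at most $2X\sum_p p^{-2}\ll X$, while $X/(y\log y)\gg X$ in that range. The bound therefore holds with an adjusted absolute constant.

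I do not expect any step to be a genuine obstacle. If a fully self-contained argument is wanted instead, the tail can be checked directly: $\sum_{p>y}p^{-2}=\int_y^\infty t^{-2}\,d\pi(t)$, which after integration by parts is at most $2\int_y^\infty \pi(t)t^{-3}\,dt\ll\int_y^\infty dt/(t^2\log t)\le 1/(y\log y)$.
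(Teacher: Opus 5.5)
Your proof is correct and follows essentially the same route as the paper: interchange the sums, apply $\ell(p^a)\le 2/p$, reduce to $X\sum_{p>y}p^{-2}$, and finish with the prime-square tail estimate. The paper splits by exact valuation $v_p(m)=a$ to obtain $2X/(p(p-1))$ per prime, whereas you count multiples of $p$ directly; your remark on small $y$, which keeps the implied constant absolute, covers a point the paper leaves implicit.
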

\begin{proof}
By nonnegativity and the preceding bound for $\ell(p^a)$,
\[
\begin{aligned}
\sum_{m\le X}T_y(m)
&\le
\sum_{p>y}\sum_{a\ge1}\frac2p\,\#\{m\le X:v_p(m)=a\}\\
&\le
2X\sum_{p>y}\sum_{a\ge1}\frac1{p^{a+1}}\\
&=
2X\sum_{p>y}\frac1{p(p-1)}
\ll
X\sum_{p>y}\frac1{p^2}.
\end{aligned}
\]
The prime-square tail estimate from
Section~\ref{sec:preliminaries} completes the proof.
\end{proof}

Fix henceforth
\[
h\in\Z\setminus\{0\},\qquad H=|h|.
\]
We call $n$ admissible if $n\in\N$, $n\le x$, and $n+h\ge1$.
For such $n$, the identity
\[
\sigma(m)=m\exp(g(m))
\]
shows that
\[
\sigma(n+h)=\lambda\sigma(n)
\]
is equivalent to
\[
g(n+h)-g(n)=\log\lambda-\log\left(1+\frac hn\right).
\]
Put
\[
D^{(h)}_{y,r}(n)=g_{y,r}(n+h)-g_{y,r}(n).
\]
Let $0<\varepsilon\le1$. Define
\[
E_0
=
\left\{n\le x:n+h\ge1,\ \left|\log\left(1+\frac hn\right)\right|>\varepsilon\right\},
\]
\[
E_1
=
\left\{n\le x:n+h\ge1,\ T_y(n)>\varepsilon\ \text{or}\ T_y(n+h)>\varepsilon\right\},
\]
and
\[
E_2
=
\left\{n\le x:n+h\ge1,\ v_p(n)>r\ \text{or}\ v_p(n+h)>r\ \text{for some }p\le y\right\}.
\]

\begin{lemma}\label{lem:exceptional-sets}
With the notation above,
\[
\#E_0\ll\frac{H}{\varepsilon},
\qquad
\#E_1\ll\frac{x+H}{\varepsilon y\log y},
\qquad
\#E_2\ll(x+H)2^{-r},
\]
with absolute implied constants.
\end{lemma}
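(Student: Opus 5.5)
The three bounds are independent, and I would prove them one at a time by elementary counting.

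\emph{The set $E_0$.} Since $0<\varepsilon\le1$, it suffices to show that $|\log(1+h/n)|>\varepsilon$ forces $n\ll H/\varepsilon$. If $h>0$, then $\log(1+h/n)\le h/n$, so the condition gives $n<H/\varepsilon$. If $h<0$, I write $t=H/n$, which lies in $(0,1)$ because $n+h\ge1$. Then $-\log(1-t)>\varepsilon$ means $t>1-e^{-\varepsilon}\ge\varepsilon/2$, using $\varepsilon\le1$. This gives $n<2H/\varepsilon$, so $\#E_0\ll H/\varepsilon$.

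\emph{The set $E_1$.} I would use Markov's inequality together with Lemma~\ref{lem:large-prime-tail}. Since $T_y\ge0$,
\[
\#\{n\le x:T_y(n)>\varepsilon\}\le\varepsilon^{-1}\sum_{m\le x}T_y(m)\ll\frac{x}{\varepsilon y\log y}.
\]
For the shifted condition, the map $n\mapsto m=n+h$ sends admissible $n$ injectively into $1\le m\le x+H$. Applying the same bound with $X=x+H$ gives $\ll(x+H)/(\varepsilon y\log y)$, and adding the two counts proves the bound for $E_1$.

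\emph{The set $E_2$.} Here $r\ge1$ is an integer, and the condition is that $p^{r+1}$ divides $n$ or $n+h$ for some prime $p\le y$. Using the same injective shift, I bound the count by
\[
2\sum_{p}\#\{m\le x+H:p^{r+1}\mid m\}\le2(x+H)\sum_p p^{-(r+1)}.
\]
No restriction to $p\le y$ is needed for this estimate. Since $r+1\ge2$, the prime sum is at most
\[
2^{-(r+1)}+\sum_{n\ge3}n^{-(r+1)}\le2^{-(r+1)}+\int_2^\infty t^{-(r+1)}\,dt=2^{-(r+1)}+\frac{2^{-r}}{r}\ll2^{-r}.
\]
This gives $\#E_2\ll(x+H)2^{-r}$ with an absolute constant.

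\emph{Main obstacle.} There is no real obstacle. The only points needing care are that every implied constant stays absolute, that the negative shift is handled in $E_0$ (the inequality $1-e^{-\varepsilon}\ge\varepsilon/2$ is valid because $\varepsilon\le1$), and that the shifted counts use $X=x+H$.
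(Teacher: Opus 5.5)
Your proof is correct and follows the paper's argument essentially step by step. For $E_1$ you use the same Markov inequality plus Lemma~\ref{lem:large-prime-tail} with $X=x+H$, and for $E_2$ the same union bound over $p^{r+1}\mid m$ compared with $\sum_{m\ge2}m^{-(r+1)}$. The only cosmetic difference is in $E_0$ for $h<0$: you invert $-\log(1-H/n)>\varepsilon$ directly via $1-e^{-\varepsilon}\ge\varepsilon/2$, whereas the paper splits off the $O(H)$ integers with $H<n\le2H$ and uses $-\log(1-H/n)\le2H/n$ for $n>2H$.
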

\begin{proof}
Suppose first that $h>0$. Since
\[
0\le\log\left(1+\frac hn\right)\le\frac Hn,
\]
membership in $E_0$ implies $n<H/\varepsilon$.

Now let $h=-H<0$. The admissibility condition $n+h\ge1$ implies $n>H$. There are $O(H)$ integers with $H<n\le2H$, while for $n>2H$,
\[
-\log\left(1-\frac Hn\right)
\le
\frac{H/n}{1-H/n}
\le
\frac{2H}{n}.
\]
Since $\varepsilon\le1$, these estimates give $\#E_0\ll H/\varepsilon$.

For $E_1$, both positive arguments $n$ and $n+h$ are at most $x+H$. Lemma~\ref{lem:large-prime-tail} and Markov's inequality therefore give
\[
\#E_1\ll\frac{x+H}{\varepsilon y\log y}.
\]
Finally, if $v_p(m)>r$, then $p^{r+1}\mid m$. Hence
\[
\#E_2
\le
(2x+H)\sum_{p\le y}p^{-(r+1)}
\ll
(x+H)2^{-r},
\]
because
\[
\sum_{p\le y}p^{-(r+1)}\le\sum_{m\ge2}m^{-(r+1)}\ll2^{-r}.
\]
\end{proof}

For $n\le x$ with $n+h\ge1$ and
$n\notin E_0\cup E_1\cup E_2$, every solution of
$\sigma(n+h)=\lambda\sigma(n)$ satisfies
\begin{equation}\label{eq:truncated-window}
\left|D^{(h)}_{y,r}(n)-\log\lambda\right|\le3\varepsilon.
\end{equation}

\subsection{Exact CRT model and anti-concentration}

Set
\[
M_{y,r}=\prod_{p\le y}p^r.
\]
For $u\in\Z/p^r\Z$, define the clipped valuation
\[
\nu_{p,r}(u)=\max\{0\le a\le r:u\equiv0\pmod{p^a}\}.
\]
Thus $\nu_{p,r}(0)=r$, and, for every positive integer $m$,
\[
\nu_{p,r}(m\bmod p^r)=\min(v_p(m),r).
\]
It follows that $D^{(h)}_{y,r}(n)$ is periodic modulo $M_{y,r}$.

Let $U$ be uniformly distributed on $\Z/M_{y,r}\Z$. By the Chinese remainder theorem, the uniform measure on this finite ring is exactly the product of the uniform measures on
\[
\prod_{p\le y}\Z/p^r\Z.
\]
Accordingly, for $p\le y$, let $U_p$ be uniformly distributed on $\Z/p^r\Z$, with the family $(U_p)_{p\le y}$ mutually independent. Define
\[
Z^{(h)}_{p,r}
=
\ell\!\left(p^{\nu_{p,r}(U_p+h)}\right)
-
\ell\!\left(p^{\nu_{p,r}(U_p)}\right).
\]
Then the distribution of $D^{(h)}_{y,r}$ on every complete residue period is exactly the distribution of
\begin{equation}\label{eq:W-model}
W^{(h)}_{y,r}=\sum_{p\le y}Z^{(h)}_{p,r}.
\end{equation}
Thus all independence used below is exact: it comes directly from the Chinese remainder theorem on the finite period $M_{y,r}$.

We next determine the local law at primes not dividing the shift.

\begin{lemma}\label{lem:local-law}
Let $p\nmid h$. Then
\[
\Pr\!\left(Z^{(h)}_{p,r}=0\right)=1-\frac2p.
\]
For $1\le a<r$,
\[
\Pr\!\left(Z^{(h)}_{p,r}=\ell(p^a)\right)
=
\Pr\!\left(Z^{(h)}_{p,r}=-\ell(p^a)\right)
=
\frac{p-1}{p^{a+1}},
\]
while
\[
\Pr\!\left(Z^{(h)}_{p,r}=\ell(p^r)\right)
=
\Pr\!\left(Z^{(h)}_{p,r}=-\ell(p^r)\right)
=
\frac1{p^r}.
\]
In particular,
\[
\Pr\!\left(Z^{(h)}_{p,r}>0\right)
=
\Pr\!\left(Z^{(h)}_{p,r}<0\right)
=
\frac1p.
\]
If $p\ge5$ and $0<L<\ell(p)$, then
\begin{equation}\label{eq:local-concentration}
Q_L\!\left(Z^{(h)}_{p,r}\right)=1-\frac2p.
\end{equation}
\end{lemma}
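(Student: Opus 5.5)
The plan is to compute the joint law of the pair $(\nu_{p,r}(U_p),\nu_{p,r}(U_p+h))$ directly. Since $p\nmid h$, the residues $U_p$ and $U_p+h$ cannot both be divisible by $p$. So at most one of the two clipped valuations is positive, and $Z^{(h)}_{p,r}$ is $0$, $\ell(p^a)$, or $-\ell(p^a)$ according as neither residue, exactly $U_p+h$, or exactly $U_p$ has clipped valuation $a\ge1$. Because $U_p$ is uniform on $\Z/p^r\Z$, so is $U_p+h$. For $1\le a<r$ the event $\nu_{p,r}(U_p)=a$ means $p^a\mid U_p$ but $p^{a+1}\nmid U_p$, which has probability $p^{-a}-p^{-a-1}=(p-1)/p^{a+1}$. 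The event $\nu_{p,r}(U_p)=r$ means $U_p=0$, with probability $p^{-r}$. The same values hold for $U_p+h$.

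Next I use that $\ell(p^a)>0$ for $a\ge1$, so the values $\pm\ell(p^a)$ are nonzero. The positive and negative events are therefore disjoint from $\{Z=0\}$ and from each other. Summing the telescoping series $\sum_{a=1}^{r-1}(p^{-a}-p^{-a-1})+p^{-r}=p^{-1}$ gives $\Pr(Z>0)=\Pr(Z<0)=1/p$. It follows that $\Pr(Z=0)=1-2/p$. If several values of $a$ produced the same $\ell(p^a)$, the probabilities would merge, but $\ell(p^a)$ is strictly increasing in $a$, so the displayed atoms are exact.

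For \eqref{eq:local-concentration}, first note that the interval $[-L/2,L/2]$ contains $0$, so $Q_L(Z)\ge 1-2/p$. For the reverse inequality, observe that every nonzero atom satisfies $|{\pm}\ell(p^a)|\ge\ell(p)>L$. Hence any interval of length $L$ that contains $0$ contains no other atom. An interval avoiding $0$ can meet only atoms of one sign, since the positive and negative atoms are separated by a gap exceeding $2\ell(p)>L$, so its mass is at most $\max(\Pr(Z>0),\Pr(Z<0))=1/p$. An interval of length $L$ may contain several positive atoms $\ell(p^a)$, but its mass is still at most $1/p$. Since $p\ge5$ gives $1/p\le 1-2/p$, the supremum equals $1-2/p$.

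The only delicate step is this last comparison, namely making sure that intervals not containing $0$ never beat $1-2/p$. This is exactly where the hypothesis $p\ge5$ is used, because $1/p\le1-2/p$ holds if and only if $p\ge3$, and the strict margin at $p\ge5$ is comfortable. Everything else is the elementary counting described above.
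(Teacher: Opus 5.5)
Your proof is correct and follows essentially the same route as the paper: you count the atoms of each clipped valuation, use $p\nmid h$ to see that at most one of $U_p$ and $U_p+h$ is divisible by $p$, and compare the zero atom's mass $1-2/p$ with the mass $1/p$ of a single sign cluster. You also add two small refinements the paper leaves implicit: that $\ell(p^a)$ is strictly increasing in $a$, so the displayed atoms do not merge, and that $p\ge3$ already suffices for the final comparison.
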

\begin{proof}
Since $p\nmid h$, the residue classes $0$ and $-h$ modulo $p$ are distinct. With probability $1-2/p$, neither $U_p$ nor $U_p+h$ is divisible by $p$, and hence $Z^{(h)}_{p,r}=0$.

If $p\mid U_p+h$, then $p\nmid U_p$, so
$Z^{(h)}_{p,r}>0$. For $1\le a<r$,
\[
\Pr\!\left(\nu_{p,r}(U_p+h)=a\right)
=
\frac{p-1}{p^{a+1}},
\]
while
\[
\Pr\!\left(\nu_{p,r}(U_p+h)=r\right)
=
\frac1{p^r}.
\]
These are the positive atoms. The same counting argument applied to
$U_p$ gives the corresponding negative atoms.

Every nonzero atom has absolute value at least $\ell(p)$. Hence an interval of length $L<\ell(p)$ that contains zero contains no nonzero atom, and therefore has mass at most $1-2/p$, with equality for an interval containing zero.

Now consider an interval of length $L<\ell(p)$ that does not contain zero. It cannot meet both the positive and negative sign clusters. It may meet more than one atom within one sign cluster, but the total mass of that entire cluster is $1/p$. Hence its probability mass is at most $1/p$. Since $p\ge5$, $1/p<1-2/p$. Therefore the maximal interval mass is the mass of the zero atom, and \eqref{eq:local-concentration} follows.
\end{proof}

At primes dividing $h$, the local law is different. If $p\mid h$, the residue classes $0$ and $-h$ coincide modulo $p$; in particular, if $p^r\mid h$, then
\[
U_p+h\equiv U_p\pmod{p^r},
\]
and consequently $Z^{(h)}_{p,r}=0$ identically. These coordinates remain part of the complete random variable $W^{(h)}_{y,r}$, but they are omitted from the anti-concentration subsum.

Set
\[
z=\min\left(y,\frac1{13\varepsilon}\right)
\]
and define
\[
\mathcal P_h^\ast(z)=\{p\text{ prime}:5\le p\le z,\ p\nmid h\}.
\]
Put
\[
S_h^\ast=\sum_{p\in\mathcal P_h^\ast(z)}Z^{(h)}_{p,r}.
\]
The remaining coordinates are collected in $\widetilde W_h$, so that
\[
W^{(h)}_{y,r}=S_h^\ast+\widetilde W_h.
\]
Because the two sums depend on disjoint sets of CRT coordinates, $S_h^\ast$ and $\widetilde W_h$ are independent.

For $p\in\mathcal P_h^\ast(z)$,
\[
6\varepsilon<\frac1{2p}\le\ell(p),
\]
and Lemma~\ref{lem:local-law} therefore gives
\[
Q_{6\varepsilon}\!\left(Z^{(h)}_{p,r}\right)=1-\frac2p.
\]
Define
\[
H_h(z)=\sum_{\substack{5\le p\le z\\p\nmid h}}\frac1p,
\]
with the convention
\[
H_h(z)^{-1/2}=+\infty
\qquad\text{when }H_h(z)=0.
\]
If $\mathcal P_h^\ast(z)$ is nonempty, apply Theorem~\ref{thm:KR} to $S_h^\ast$ with global scale $L=6\varepsilon$ and with every local scale equal to $L_j=6\varepsilon$. Thus $0<L_j\le2L$, as required in Theorem~\ref{thm:KR}, and
\[
1-Q_{6\varepsilon}\!\left(Z^{(h)}_{p,r}\right)=\frac2p.
\]
The universal constant in Theorem~\ref{thm:KR} therefore gives
\[
Q_{6\varepsilon}(S_h^\ast)\ll H_h(z)^{-1/2}.
\]
By Lemma~\ref{lem:convolution} and the independent decomposition $W^{(h)}_{y,r}=S_h^\ast+\widetilde W_h$, we obtain
\[
Q_{6\varepsilon}\!\left(W^{(h)}_{y,r}\right)\le Q_{6\varepsilon}(S_h^\ast).
\]
If $\mathcal P_h^\ast(z)$ is empty, the trivial bound by $1$, together with the convention above, gives the same conclusion. Hence
\begin{equation}\label{eq:W-concentration}
Q_{6\varepsilon}\!\left(W^{(h)}_{y,r}\right)
\ll
\min\left\{1,H_h(z)^{-1/2}\right\}.
\end{equation}
The implied constant is absolute.

It remains to transfer the complete-period estimate to the original integers. The admissible set
\[
\{n\in\N:n\le x,\ n+h\ge1\}
\]
is a consecutive interval. Partition it into complete blocks of length $M_{y,r}$ and at most one incomplete final block. For every $t\in\mathbb R$,
\begin{equation}\label{eq:period-transfer}
\begin{aligned}
&\#\left\{n\le x:n+h\ge1,\ \left|D^{(h)}_{y,r}(n)-t\right|\le3\varepsilon\right\}\\
&\qquad\le
x\,\Pr\!\left(\left|W^{(h)}_{y,r}-t\right|\le3\varepsilon\right)+M_{y,r}.
\end{aligned}
\end{equation}
The event inside the probability is an interval of length $6\varepsilon$. Combining \eqref{eq:W-concentration} and \eqref{eq:period-transfer}, and then taking $t=\log\lambda$, gives the required estimate for the nonexceptional solutions.

\subsection{Master estimate and parameter choice}

\begin{proposition}\label{prop:master}
Let
\[
x\ge1,\qquad
y\ge5,\qquad
r\ge1,\qquad
0<\varepsilon\le1,
\]
and put
\[
z=\min\left(y,\frac1{13\varepsilon}\right).
\]
Then, uniformly for all $\lambda>0$,
\begin{equation}\label{eq:master}
\frac{A_{h,\lambda}(x)}{x}
\ll
\frac{H}{x\varepsilon}
+
\frac{1+H/x}{\varepsilon y\log y}
+
(1+H/x)2^{-r}
+
\frac{M_{y,r}}{x}
+
\min\left\{1,H_h(z)^{-1/2}\right\},
\end{equation}
where $H=|h|$. The implied constant is absolute.
\end{proposition}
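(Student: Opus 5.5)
We fix $\lambda>0$ and split the admissible $n$ (those with $n\le x$ and $n+h\ge1$) into the exceptional part $E_0\cup E_1\cup E_2$ and the remaining $n$. Lemma~\ref{lem:exceptional-sets} bounds the exceptional part by
\[
\ll\frac{H}{\varepsilon}+\frac{x+H}{\varepsilon y\log y}+(x+H)2^{-r}.
\]
After dividing by $x$, these are exactly the first three terms of \eqref{eq:master}, and the bounds do not involve $\lambda$.

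For the nonexceptional solutions we use \eqref{eq:truncated-window}. Its derivation should be written out, since it is the one place where the three exceptional sets enter together. Outside $E_2$ we have $g(m)=g_{y,r}(m)+T_y(m)$ for $m=n$ and $m=n+h$. Outside $E_1$ both tails $T_y(n)$ and $T_y(n+h)$ lie in $[0,\varepsilon]$, so their difference is at most $\varepsilon$ in absolute value. Outside $E_0$ the correction $|\log(1+h/n)|$ is at most $\varepsilon$. Writing the equation as $g(n+h)-g(n)=\log\lambda-\log(1+h/n)$, the triangle inequality gives $|D^{(h)}_{y,r}(n)-\log\lambda|\le 2\varepsilon$, which is within the stated $3\varepsilon$.

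It follows that the number of nonexceptional solutions is at most the left-hand side of \eqref{eq:period-transfer} with $t=\log\lambda$. The transfer estimate rests on three facts:
\begin{itemize}
\item $D^{(h)}_{y,r}$ is periodic modulo $M_{y,r}$;
\item the admissible set is an interval of length at most $x$, containing at most $x/M_{y,r}$ complete blocks;
\item on each complete block, the count equals $M_{y,r}\Pr(|W^{(h)}_{y,r}-t|\le3\varepsilon)$ by the exact CRT identification.
\end{itemize}
The incomplete final block adds at most $M_{y,r}$. This gives the bound $x\,Q_{6\varepsilon}(W^{(h)}_{y,r})+M_{y,r}$. Inserting \eqref{eq:W-concentration} and dividing by $x$ yields the last two terms of \eqref{eq:master}.

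The implied constant is absolute because each ingredient has an absolute constant: Lemma~\ref{lem:exceptional-sets}, the Kolmogorov--Rogozin constant $C_{\mathrm{KR}}$, and Lemma~\ref{lem:convolution}. Every bound is independent of $\lambda$, so taking the supremum over $\lambda$ costs nothing.

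The only delicate step is checking the hypothesis of Lemma~\ref{lem:local-law} for each $p\in\mathcal P_h^\ast(z)$. We need $p\ge5$, $p\nmid h$, and $6\varepsilon<\ell(p)$. The last follows from $p\le z\le 1/(13\varepsilon)$ together with $\ell(p)\ge1/(2p)$, since then $1/(2p)\ge 13\varepsilon/2>6\varepsilon$. The hypothesis $y\ge5$ only guarantees that the range $[5,z]$ can be nonempty; when it is empty, the convention $H_h(z)^{-1/2}=+\infty$ leaves the trivial bound $1$. With these checks in place, the proof assembles the estimates above.
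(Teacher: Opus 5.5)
Your proposal is correct and follows the paper's proof. The exceptional sets of Lemma~\ref{lem:exceptional-sets} give the first three terms, and the remaining solutions pass through \eqref{eq:truncated-window}, the period transfer \eqref{eq:period-transfer} with $t=\log\lambda$, and the concentration bound \eqref{eq:W-concentration}. The checks you add are sound: the triangle-inequality derivation of the window in fact gives $2\varepsilon$, and $6\varepsilon<1/(2p)\le\ell(p)$ for $p\le z$. These are the facts the paper establishes in the text just before the proposition rather than inside its proof.
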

\begin{proof}
The exceptional sets in Lemma~\ref{lem:exceptional-sets} contribute, after division by $x$,
\[
\ll\frac{H}{x\varepsilon},
\qquad
\ll\frac{1+H/x}{\varepsilon y\log y},
\qquad
\ll(1+H/x)2^{-r},
\]
respectively. Every $n\le x$ counted by $A_{h,\lambda}(x)$ and not belonging to
$E_0\cup E_1\cup E_2$ satisfies \eqref{eq:truncated-window}. Applying \eqref{eq:period-transfer} with $t=\log\lambda$, and then \eqref{eq:W-concentration}, contributes
\[
\frac{M_{y,r}}x
+
O\!\left(\min\left\{1,H_h(z)^{-1/2}\right\}\right).
\]
Combining these five contributions proves \eqref{eq:master}.
\end{proof}

We now choose the parameters. Put
\[
L_3=\log\log\log x
\]
and, for sufficiently large $x$, take
\begin{equation}\label{eq:parameters}
r=\left\lceil\log_2L_3\right\rceil,
\qquad
y=\frac{\log x}{8r},
\qquad
\varepsilon=\frac1{13y}.
\end{equation}
Then $z=y$, and
\begin{equation}\label{eq:two-r}
2^{-r}\le L_3^{-1}.
\end{equation}
Using the bound $\vartheta(y)\le2y$ from Section~\ref{sec:preliminaries}, for all sufficiently large $x$,
\[
\log M_{y,r}=r\vartheta(y)\le2ry=\frac14\log x.
\]
Hence
\begin{equation}\label{eq:M-bound}
M_{y,r}\le x^{1/4}.
\end{equation}
For fixed $h$, Mertens' estimate gives
\[
\begin{aligned}
H_h(y)
&=\sum_{5\le p\le y}\frac1p-\sum_{\substack{p\mid h\\5\le p\le y}}\frac1p\\
&=\log\log y+O_h(1).
\end{aligned}
\]
Moreover,
\[
\log y=\log\log x-\log(8r),
\qquad
r=O(\log L_3),
\]
so $\log\log y=L_3+o(1)$. Consequently, after increasing a threshold depending on $h$ if necessary,
\begin{equation}\label{eq:H-lower}
H_h(y)\ge cL_3
\end{equation}
for an absolute constant $c>0$. Therefore
\begin{equation}\label{eq:H-upper}
H_h(y)^{-1/2}\ll L_3^{-1/2},
\end{equation}
with an absolute implied constant once $x\ge x_0(h)$.

The remaining terms in \eqref{eq:master} are smaller. For fixed $H=|h|$,
\[
\frac{H}{x\varepsilon}=\frac{13Hy}{x}=o(L_3^{-1/2}),
\]
and
\[
\frac{1+H/x}{\varepsilon y\log y}
=
\frac{13(1+H/x)}{\log y}=o(L_3^{-1/2}).
\]
Furthermore, by \eqref{eq:two-r},
\[
(1+H/x)2^{-r}=O(L_3^{-1})=o(L_3^{-1/2}),
\]
while \eqref{eq:M-bound} gives
\[
\frac{M_{y,r}}x\le x^{-3/4}=o(L_3^{-1/2}).
\]
Substituting these estimates and \eqref{eq:H-upper} into Proposition~\ref{prop:master} yields
\[
A_{h,\lambda}(x)
\le
C\frac{x}{\sqrt{\log\log\log x}}
\]
simultaneously for all real $\lambda>0$, where $C$ is absolute. The threshold required for \eqref{eq:H-lower}, for the elementary prime estimates, and for $x$ to dominate the fixed quantity $H$ may depend on $h$. This proves Theorem~\ref{thm:uniform-levelset}.

Finally,
\[
0\le\sup_{\lambda>0}\frac{A_{h,\lambda}(x)}x
\le
\frac{C}{\sqrt{\log\log\log x}}\longrightarrow0,
\]
which proves the stated density consequence.
\section{Conditional existence and structural consequences}\label{sec:conditional}

\subsection{The multiplier-two family}

For $t\ge1$, define
\[
\begin{aligned}
f_1(t)&=252t+223, & f_2(t)&=6t+5,\\
f_3(t)&=7t+6, & f_4(t)&=36t+31,
\end{aligned}
\]
and put
\[
n_t=f_1(t)f_2(t),\qquad N_t=n_t+1.
\]
A direct calculation gives
\[
n_t=1512t^2+2598t+1115
\]
and
\begin{equation}\label{eq:Nt}
N_t=6f_3(t)f_4(t).
\end{equation}
Define
\[
\mathcal T
=
\{t\in\N:f_1(t),f_2(t),f_3(t),f_4(t)\text{ are all prime}\},
\]
and
\[
F(x)=\#\{t\in\mathcal T:n_t\le x\}.
\]
We use Schinzel's Hypothesis $H$ in its standard prime-values form
\cite{SchinzelSierpinski1958,SchinzelSierpinskiCorr1959}, and the
Bateman--Horn conjecture in the form of \cite{BatemanHorn1962}.

\begin{theorem}\label{thm:conditional-family}
The following assertions hold.

\textup{(i)} Assume Schinzel's Hypothesis $H$. Then $\mathcal T$ is infinite, and for every $t\in\mathcal T$,
\[
\sigma(N_t)=2\sigma(n_t).
\]
Moreover, the integers $n_t$, $t\in\mathcal T$, are pairwise distinct. Consequently, under Schinzel's Hypothesis $H$, the equation
\[
\sigma(n+1)=2\sigma(n)
\]
has infinitely many positive integer solutions.

\textup{(ii)} Assume the Bateman--Horn conjecture for the four linear forms $f_1,f_2,f_3,f_4$. For each prime $p$, let $\nu(p)$ denote the number of residue classes modulo $p$ on which
\[
f_1(t)f_2(t)f_3(t)f_4(t)
\]
vanishes, and put
\[
\mathfrak S
=
\prod_p\left(1-\frac1p\right)^{-4}\left(1-\frac{\nu(p)}p\right).
\]
Then $\mathfrak S>0$, and
\[
F(x)
\sim
\frac{8\mathfrak S}{3\sqrt{42}}
\frac{\sqrt{x}}{(\log x)^4}.
\]
In particular,
\[
F(x)\gg\frac{\sqrt{x}}{(\log x)^4}.
\]
\end{theorem}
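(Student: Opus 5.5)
The plan is a direct verification: establish the polynomial identity \eqref{eq:Nt}, compute $\sigma(n_t)$ and $\sigma(N_t)$ by multiplicativity whenever $t\in\mathcal T$, check that the system $f_1,\dots,f_4$ is admissible, and then apply Hypothesis $H$ and Bateman--Horn directly.

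\textbf{Identity and the value of $\sigma$.} First expand
\[
6(7t+6)(36t+31)=6(252t^2+433t+186)=1512t^2+2598t+1116=n_t+1 .
\]
Now let $t\in\mathcal T$.
\begin{itemize}
\item The primes $f_1(t)>f_2(t)$ are distinct, so
\[
\sigma(n_t)=(252t+224)(6t+6)=168(9t+8)(t+1).
\]
\item The primes $f_3(t)\ge 13$ and $f_4(t)\ge 67$ are distinct and coprime to $6$, so
\[
\sigma(N_t)=\sigma(6)(7t+7)(36t+32)=12\cdot 7(t+1)\cdot 4(9t+8)=336(t+1)(9t+8).
\]
\end{itemize}
Hence $\sigma(N_t)=2\sigma(n_t)$. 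Since $n_t$ is strictly increasing in $t\ge1$, the values $n_t$ are pairwise distinct.

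\textbf{Admissibility.} Each $f_i$ is linear with positive leading coefficient, hence irreducible. It remains to show $\nu(p)<p$ for every prime $p$.
\begin{itemize}
\item For $p\ge 7$ this is automatic, since $\nu(p)\le 4<p$.
\item For $p=2$: $f_1,f_2,f_4$ are always odd and $f_3$ vanishes only for $t$ even, so $\nu(2)=1$.
\item For $p=3$: $f_1\equiv1$, $f_2\equiv2$, $f_4\equiv1$ and $f_3\equiv t$, so $\nu(3)=1$.
\item For $p=5$: the roots are $t\equiv 1,0,2,4$, so $\nu(5)=4<5$.
\end{itemize}
Hypothesis $H$ therefore gives infinitely many $t\in\mathcal T$, which proves (i).

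\textbf{The asymptotic (ii).} For $p$ not dividing the leading coefficients or the pairwise resultants of the $f_i$, the four roots modulo $p$ are distinct, so $\nu(p)=4$. For such $p$,
\[
\left(1-\frac1p\right)^{-4}\left(1-\frac4p\right)=1+O(p^{-2}),
\]
so the Euler product converges. Every factor is positive because $\nu(p)<p$, hence $\mathfrak S>0$.

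Bateman--Horn gives
\[
\#\{t\le T:t\in\mathcal T\}\sim \mathfrak S\,\frac{T}{(\log T)^4}.
\]
The condition $n_t\le x$ is equivalent to $t\le T(x)$ with $T(x)\sim\sqrt{x/1512}=\sqrt x/(6\sqrt{42})$. Moreover $\log T(x)\sim\tfrac12\log x$, so $(\log T)^4\sim(\log x)^4/16$. Substituting,
\[
F(x)\sim \frac{16\,\mathfrak S}{6\sqrt{42}}\cdot\frac{\sqrt x}{(\log x)^4}=\frac{8\mathfrak S}{3\sqrt{42}}\cdot\frac{\sqrt x}{(\log x)^4},
\]
which is the stated asymptotic.

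\textbf{Main obstacle.} There is no deep step here. The points needing care are the small-prime local checks (especially $p=2,3,5$, where a fixed prime divisor would destroy admissibility) and matching the Bateman--Horn normalisation to the exact constant.
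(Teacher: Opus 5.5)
Your proposal is correct and follows essentially the same route as the paper: verify $N_t=6f_3(t)f_4(t)$, compute both $\sigma$-values by multiplicativity, check that $\nu(p)<p$ for every prime $p$ so that Hypothesis $H$ and Bateman--Horn apply, and then convert the count in $t$ into a count in $x$ via monotonicity and $T_x\sim\sqrt{x/1512}=\sqrt{x}/(6\sqrt{42})$. The differences are only cosmetic. Your separate check at $p=5$ is unnecessary, since $\nu(p)\le4<p$ for every $p\ge5$. You also do not identify the exceptional primes $7$ and $13$, which the paper lists but which play no role in proving $\mathfrak S>0$ or the asymptotic.
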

\begin{proof}
We first prove part~(i). Let $t\in\mathcal T$. Since
$f_1(t)-f_2(t)=246t+218>0$, the primes $f_1(t)$ and $f_2(t)$ are
distinct. Similarly, $f_4(t)-f_3(t)=29t+25>0$, so $f_3(t)$ and
$f_4(t)$ are distinct. Since $t\in\mathcal T$, the latter two
numbers are primes exceeding $3$, and hence are coprime to $6$.
Thus multiplicativity of $\sigma$ applies to
\[
n_t=f_1(t)f_2(t),
\qquad
N_t=6f_3(t)f_4(t).
\] Thus
\[
\begin{aligned}
\sigma(n_t)
&=(252t+224)(6t+6)\\
&=28(9t+8)\cdot6(t+1)\\
&=168(t+1)(9t+8),
\end{aligned}
\]
whereas
\[
\begin{aligned}
\sigma(N_t)
&=\sigma(6)(7t+7)(36t+32)\\
&=12\cdot7(t+1)\cdot4(9t+8)\\
&=336(t+1)(9t+8).
\end{aligned}
\]
Consequently,
\begin{equation}\label{eq:multiplier-two}
\sigma(N_t)=2\sigma(n_t).
\end{equation}

We next verify the hypotheses of Schinzel's Hypothesis $H$. Each
$f_i$ is a nonconstant irreducible linear polynomial with positive
leading coefficient, and each is primitive; indeed,
$(252,223)=(6,5)=(7,6)=(36,31)=1$. For every prime $p\ge5$, the
product $f_1f_2f_3f_4$ has at most four zeros modulo $p$, and hence
fewer than $p$. For $p=2$ and $p=3$, a direct calculation shows that
the product has exactly one zero modulo $p$. Thus $f_1f_2f_3f_4$ has no
fixed prime divisor. Schinzel's Hypothesis $H$ therefore implies that $\mathcal T$ is infinite.

Finally,
\[
n_{t+1}-n_t=3024t+4110>0\qquad(t\ge0),
\]
so $t\mapsto n_t$ is strictly increasing. Hence the integers $n_t$ with $t\in\mathcal T$ are pairwise distinct. This proves part~(i).

For part~(ii), write the four forms as $a_it+b_i$. If a prime $p$ divides neither a leading coefficient $a_i$ nor any of the pairwise determinants $a_ib_j-a_jb_i$, then all four forms have well-defined roots modulo $p$, and these roots are pairwise distinct. For the present forms, the six pairwise determinants are
\[
-78,\quad-49,\quad-216,\quad1,\quad6,\quad1.
\]
Together with the prime divisors of the leading coefficients, these
show that only $2,3,7,13$ can differ from the generic four-root case. Direct calculation gives
$\nu(2)=1$, $\nu(3)=1$, $\nu(7)=2$, and $\nu(13)=3$, whereas
$\nu(p)=4$ for $p\notin\{2,3,7,13\}$.
Every local factor in $\mathfrak S$ is therefore positive. Moreover, for $p\notin\{2,3,7,13\}$,
\[
\left(1-\frac4p\right)\left(1-\frac1p\right)^{-4}
=
1-\frac6{p^2}+O(p^{-3}).
\]
Since $\sum_p\frac1{p^2}<\infty$, the product over the nonexceptional primes converges absolutely to a positive limit. The finitely many exceptional local factors are also positive. Hence $\mathfrak S>0$.

Let
\[
\Pi(T)=\#\{1\le t\le T:t\in\mathcal T\}.
\]
The Bateman--Horn conjecture gives
\[
\Pi(T)
\sim
\mathfrak S\int_2^T\frac{du}{(\log u)^4}
\sim
\mathfrak S\frac{T}{(\log T)^4}.
\]
Since
\[
n_t=1512t^2+2598t+1115,
\]
the largest integer $t$ for which $n_t\le x$ is, for all sufficiently large $x$,
\[
T_x
=
\left\lfloor
\frac{\sqrt{6048x+6084}-2598}{3024}
\right\rfloor.
\]
Thus
\[
T_x\sim\sqrt{\frac{x}{1512}},
\qquad
\log T_x=\frac12\log x+O(1).
\]
By the strict monotonicity proved above,
\[
F(x)=\Pi(T_x).
\]
It follows that
\[
\begin{aligned}
F(x)
&\sim\mathfrak S\frac{T_x}{(\log T_x)^4}\\
&\sim\frac{16\mathfrak S}{\sqrt{1512}}\frac{\sqrt{x}}{(\log x)^4}\\
&=\frac{8\mathfrak S}{3\sqrt{42}}\frac{\sqrt{x}}{(\log x)^4}.
\end{aligned}
\]
This proves part~(ii).
\end{proof}

\subsection{Structural properties of the multiplier-two family}

Let $t\in\mathcal T$, and put
\[
p=7t+6,\qquad q=36t+31,\qquad m=pq.
\]
Then $N_t=6m$, where $p$ and $q$ are distinct primes exceeding $3$. In particular, $(m,6)=1$.

\begin{proposition}\label{prop:structural-family}
For every $t\in\mathcal T$, the integer $N_t$ is Zumkeller, semiperfect, and abundant; moreover, it is neither weird nor practical. More precisely, the two sets
\[
\{6d:d\mid m\}
\]
and
\[
\{d:d\mid m\}\cup\{2d:d\mid m\}\cup\{3d:d\mid m\}
\]
form a partition of $D(N_t)$ into classes of equal sum, and
\[
\sigma(N_t)-2N_t=516t+456.
\]
\end{proposition}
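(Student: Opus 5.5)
The plan rests on the factorization $N_t=6m$ with $(m,6)=1$. Because of it, every divisor of $N_t$ can be written uniquely as $ed$ with $e\mid 6$ and $d\mid m$. Hence $D(N_t)$ is the disjoint union of the four sets $\{ed:d\mid m\}$ for $e\in\{1,2,3,6\}$, and these sets have sums $\sigma(m)$, $2\sigma(m)$, $3\sigma(m)$ and $6\sigma(m)$ respectively.

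\emph{Equal-sum partition.} The class $\{6d:d\mid m\}$ has sum $6\sigma(m)$. The union of the other three classes has sum $(1+2+3)\sigma(m)=6\sigma(m)$. Each of these equals $\sigma(N_t)/2=12\sigma(m)/2$, so $N_t$ is Zumkeller.

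\emph{Semiperfect.} In the partition above, the class $\{6d:d\mid m\}$ contains $N_t=6m$ itself. The other class therefore consists of proper divisors, and its sum is $\sigma(N_t)/2$. We also have $\sigma(N_t)/2=6\sigma(m)$ and $N_t=6m$, so this sum equals $N_t$ exactly when $\sigma(m)=m$, which is false. So the direct argument does not work, and I would argue instead as follows. Remove $N_t$ from its class; the proper divisors $6d$ with $d\mid m$, $d<m$ sum to $6(\sigma(m)-m)$. Then add the divisors $d\mid m$, whose sum is $\sigma(m)$. The total is $6\sigma(m)-6m+\sigma(m)$. That is too large, so I switch to the cleaner standard fact that a proper multiple of a perfect number is semiperfect. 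Here $6$ is perfect, and for each $d\mid m$ we have $6d=d+2d+3d$. Summing over the single divisor $d=m$ gives $N_t=m+2m+3m$, and $m$, $2m$, $3m$ are distinct proper divisors of $N_t$. So $N_t$ is semiperfect.

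\emph{Abundance and the excess.} $\sigma(N_t)=12(p+1)(q+1)$, which by the earlier computation equals $336(t+1)(9t+8)$. Also $2N_t=12(7t+6)(36t+31)$. Expanding both, the difference is $12\bigl((p+1)(q+1)-pq\bigr)=12(p+q+1)=12(43t+38)=516t+456>0$. Hence $N_t$ is abundant, and since it is also semiperfect it is not weird.

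\emph{Not practical.} I apply Theorem~\ref{thm:practical} to $N_t=2\cdot3\cdot p\cdot q$ with $p<q$. The condition at $p$ is $p\le1+\sigma(6)=13$, but $p=7t+6\ge13$, with equality only if $t=1$. When $t=1$, $q=67$ and the condition at $q$ requires $67\le1+\sigma(6\cdot13)=169$, which holds; that is suspicious, so I check primality. For $t=1$, $f_1(1)=475$ is not prime, so $t=1\notin\mathcal T$. For $t\ge2$ we get $p\ge20>13$, the criterion fails, and $N_t$ is not practical.

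The main obstacle is routine bookkeeping: excluding the small values of $t$ in the practicality step, and checking that $m,2m,3m$ are distinct proper divisors. Both are immediate.
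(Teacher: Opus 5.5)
Your proof is correct and follows the paper's argument essentially step for step. It uses the same splitting of $D(N_t)$ into the four classes $\{ed:d\mid m\}$ for $e\mid 6$, the identity $N_t=m+2m+3m$ for semiperfection, the excess $12(p+q+1)=516t+456$, and the Stewart--Sierpi\'nski test reducing to $t=1$, which is excluded because $f_1(1)=475$ is composite. The only slip is in your discarded scratch work: there $7\sigma(m)-6m$ is actually smaller than $N_t=6m$, since $\sigma(m)/m=(1+1/p)(1+1/q)<12/7$, not larger, but this plays no role in the final argument.
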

\begin{proof}
Since $(6,m)=1$, every positive divisor of $N_t=6m$ has a unique representation
\[
ed,\qquad e\mid6,\quad d\mid m.
\]
Hence $D(N_t)$ is the disjoint union
\[
\{d:d\mid m\},\qquad
\{2d:d\mid m\},\qquad
\{3d:d\mid m\},\qquad
\{6d:d\mid m\}.
\]
The sum of the divisors in the last class is $6\sigma(m)$, while the sum over the other three classes is
\[
(1+2+3)\sigma(m)=6\sigma(m).
\]
Thus the displayed two sets form a Zumkeller partition of $D(N_t)$.

Furthermore, $m,2m,3m$ are distinct proper divisors of $N_t$, and
$m+2m+3m=N_t$. Hence $N_t$ is semiperfect.

Since $p$ and $q$ are distinct primes coprime to $6$,
\[
\sigma(N_t)=12(p+1)(q+1).
\]
Therefore
\[
\begin{aligned}
\sigma(N_t)-2N_t
&=12\bigl((p+1)(q+1)-pq\bigr)\\
&=12(p+q+1)\\
&=12(43t+38)\\
&=516t+456>0.
\end{aligned}
\]
Thus $N_t$ is abundant. Since it is semiperfect, it is not weird.

It remains to rule out practicality. We have
\[
N_t=2\cdot3\cdot p\cdot q,
\qquad
2<3<p<q.
\]
By the Stewart--Sierpi\'nski characterization in Theorem~\ref{thm:practical}, practicality would require
\[
p\le1+\sigma(2\cdot3)=13.
\]
Since $p=7t+6$, this forces $t=1$. Then $p=13$ and $q=67$, and
$67\le1+\sigma(2\cdot3\cdot13)=169$. However,
$f_1(1)=475$ is composite, so $1\notin\mathcal T$. Therefore $N_t$ is not practical for any $t\in\mathcal T$.
\end{proof}

No analogous divisor-partition conclusion is asserted for arbitrary solutions of
\[
\sigma(n+h)=k\sigma(n).
\]

\begin{corollary}\label{cor:structural-BH}
Assume the Bateman--Horn conjecture for $f_1,f_2,f_3,f_4$. Then
\[
\#\left\{t\in\mathcal T:
\begin{array}{l}
n_t\le x,\\
N_t\text{ is Zumkeller, semiperfect, abundant,}\\
N_t\text{ is non-weird and non-practical}
\end{array}
\right\}
\sim
\frac{8\mathfrak S}{3\sqrt{42}}\frac{\sqrt{x}}{(\log x)^4}.
\]
\end{corollary}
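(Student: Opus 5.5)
The plan is to show that the set being counted coincides exactly with $\{t\in\mathcal T:n_t\le x\}$. The count is then $F(x)$ by definition, and Theorem~\ref{thm:conditional-family}\textup{(ii)} gives the asymptotic directly. No new analytic input is needed; the corollary is a bookkeeping combination of two results already proved.

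First, I would note that the defining conditions of the set are $t\in\mathcal T$, $n_t\le x$, and a list of structural properties of $N_t$. By Proposition~\ref{prop:structural-family}, every $t\in\mathcal T$ has $N_t$ Zumkeller, semiperfect, abundant, non-weird and non-practical. That proposition is unconditional once $t\in\mathcal T$, since it only uses that $p=7t+6$ and $q=36t+31$ are distinct primes exceeding $3$. The practicality exclusion is also unconditional, because it rules out $t=1$ by the compositeness of $f_1(1)=475$. Hence the structural conditions impose no restriction beyond $t\in\mathcal T$, and the set in question equals $\{t\in\mathcal T:n_t\le x\}$.

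Second, I would observe that this set has cardinality exactly $F(x)$. This is the definition of $F$, and by the strict monotonicity of $t\mapsto n_t$ from the proof of part~(i), counting indices $t$ is the same as counting the distinct integers $n_t$. Under the Bateman--Horn conjecture for $f_1,\dots,f_4$, Theorem~\ref{thm:conditional-family}\textup{(ii)} then gives $F(x)\sim\frac{8\mathfrak S}{3\sqrt{42}}\frac{\sqrt x}{(\log x)^4}$ with $\mathfrak S>0$, which is the claimed asymptotic.

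There is no real obstacle. The only point needing care is that the structural properties hold for every $t\in\mathcal T$ without exception, so that no density-zero or finite exceptional set has to be removed. Proposition~\ref{prop:structural-family} settles this, in particular by the explicit exclusion of $t=1$ in its non-practicality argument.
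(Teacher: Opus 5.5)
Your proposal is correct and is essentially the paper's own argument. Proposition~\ref{prop:structural-family} holds for every $t\in\mathcal T$, so the counted set is exactly $\{t\in\mathcal T:n_t\le x\}$, whose cardinality is $F(x)$ by definition, and Theorem~\ref{thm:conditional-family}\textup{(ii)} supplies the asymptotic. Your appeal to the strict monotonicity of $t\mapsto n_t$ is harmless but not needed, since both the corollary and $F(x)$ count indices $t$.
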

\begin{proof}
Proposition~\ref{prop:structural-family} applies to every $t\in\mathcal T$. Hence the counting function in the corollary is precisely $F(x)$, and the assertion follows from Theorem~\ref{thm:conditional-family}\textup{(ii)}.
\end{proof}

\subsection{Conjectures and an open problem}

The preceding results concern the explicit multiplier-two family above.
They lead naturally to the following general infinitude conjecture.

\begin{conjecture}\label{conj:generalized-ES}
For every pair of positive integers $h$ and $k$,
\[
\#\{n\ge1:\sigma(n+h)=k\sigma(n)\}=\infty.
\]
\end{conjecture}
The case $(h,k)=(1,1)$ is the classical Erd\H{o}s--Sierpi\'nski conjecture.

Corollary~\ref{cor:integer-multiplier} shows, on the other hand, that for every fixed $h\ge1$ these conjecturally infinite solution sets are nevertheless
quantitatively sparse:
\[
\sup_{k\in\N}A_{h,k}(x)\ll_h\frac{x}{(\log x)^2}.
\]
The exceptional set introduced in Section~\ref{sec:divisibility} suggests a second, more specific conjecture.

\begin{conjecture}\label{conj:V1}
One has $\mathcal V_1=\varnothing$. Equivalently,
\[
\frac{I(v)}{I(v+1)}\notin\N\qquad(v\ge2).
\]
\end{conjecture}
Proposition~\ref{prop:integral-abundancy} gives
\[
v\in\mathcal V_1\Longrightarrow v\mid\sigma(v),
\]
so every hypothetical member of $\mathcal V_1$ would be multiperfect. Moreover, Conjecture~\ref{conj:V1} together with Corollary~\ref{cor:dichotomy}\textup{(i)} would imply
\[
B_1(x)
\le
x\exp\!\left(
-\left(\frac1{\sqrt6}+o(1)\right)\sqrt{\log x\,\log\log x}
\right).
\]
The conjecture is restricted to the consecutive shift. Indeed, as noted in Remark~\ref{rem:V22},
\[
6\in\mathcal V_{22},\qquad I(6)=I(28)=2.
\]

\begin{openproblem}\label{prob:characterize-Vh}
Characterize the shifts $h\ge1$ for which $\mathcal V_h\ne\varnothing$.
\end{openproblem}

\printbibliography

@article{CanfieldErdosPomerance1983,
  author  = {Canfield, E. R. and Erd{\H{o}}s, Paul and Pomerance, Carl},
  title   = {On a problem of Oppenheim concerning ``Factorisatio Numerorum''},
  journal = {Journal of Number Theory},
  volume  = {17},
  number  = {1},
  pages   = {1--28},
  year    = {1983},
  doi     = {10.1016/0022-314X(83)90002-1}
}

@article{HildebrandTenenbaum1993,
  author  = {Hildebrand, Adolf and Tenenbaum, G{\'e}rald},
  title   = {Integers without large prime factors},
  journal = {Journal de Th{\'e}orie des Nombres de Bordeaux},
  volume  = {5},
  number  = {2},
  pages   = {411--484},
  year    = {1993},
  doi     = {10.5802/jtnb.101}
}

@book{HalberstamRichert1974,
  author    = {Halberstam, Heini and Richert, Hans-Egon},
  title     = {Sieve Methods},
  series    = {London Mathematical Society Monographs},
  volume    = {4},
  publisher = {Academic Press},
  year      = {1974}
}

@article{LucaPomerance2007,
  author  = {Luca, Florian and Pomerance, Carl},
  title   = {On the average number of divisors of the {Euler} function},
  journal = {Publicationes Mathematicae Debrecen},
  volume  = {70},
  number  = {1--2},
  pages   = {125--148},
  year    = {2007},
  doi     = {10.5486/PMD.2007.3476}
}

@article{LucaPomeranceCorr2016,
  author  = {Luca, Florian and Pomerance, Carl},
  title   = {Corrigendum: ``On the average number of divisors of the {Euler} function''},
  journal = {Publicationes Mathematicae Debrecen},
  volume  = {89},
  number  = {1--2},
  pages   = {257--260},
  year    = {2016},
  doi     = {10.5486/PMD.2016.7645}
}

@article{Kesten1969,
  author  = {Kesten, Harry},
  title   = {A sharper form of the {Doeblin--L{\'e}vy--Kolmogorov--Rogozin} inequality for concentration functions},
  journal = {Mathematica Scandinavica},
  volume  = {25},
  pages   = {133--144},
  year    = {1969},
  doi     = {10.7146/math.scand.a-10950}
}

@article{MahantaSaikiaYaqubi2020,
  author  = {Mahanta, Pankaj Jyoti and Saikia, Manjil P. and Yaqubi, Daniel},
  title   = {Some properties of {Zumkeller} numbers and $k$-layered numbers},
  journal = {Journal of Number Theory},
  volume  = {217},
  pages   = {218--236},
  year    = {2020},
  doi     = {10.1016/j.jnt.2020.05.003}
}

@online{Jokar2022,
  author      = {Jokar, Farshid},
  title       = {On $k$-layered numbers},
  year        = {2022},
  eprint      = {2207.09053},
  eprinttype  = {arXiv},
  eprintclass = {math.NT},
  note        = {Preprint}
}

@article{Stewart1954,
  author  = {Stewart, B. M.},
  title   = {Sums of distinct divisors},
  journal = {American Journal of Mathematics},
  volume  = {76},
  number  = {4},
  pages   = {779--785},
  year    = {1954},
  doi     = {10.2307/2372651}
}

@article{Sierpinski1955,
  author  = {Sierpi{\'n}ski, Wac{\l}aw},
  title   = {Sur une propri{\'e}t{\'e} des nombres naturels},
  journal = {Annali di Matematica Pura ed Applicata},
  volume  = {39},
  number  = {1},
  pages   = {69--74},
  year    = {1955},
  doi     = {10.1007/BF02410762}
}

@article{GuyShanks1974,
  author  = {Guy, Richard K. and Shanks, Daniel},
  title   = {A constructed solution of $\sigma(n)=\sigma(n+1)$},
  journal = {The Fibonacci Quarterly},
  volume  = {12},
  number  = {3},
  pages   = {299},
  year    = {1974}
}

@article{Makowski1960,
  author  = {M{\k{a}}kowski, Andrzej},
  title   = {On some equations involving functions $\phi(n)$ and $\sigma(n)$},
  journal = {The American Mathematical Monthly},
  volume  = {67},
  number  = {7},
  pages   = {668--670},
  year    = {1960},
  doi     = {10.2307/2310107}
}

@article{MakowskiCorr1961,
  author  = {M{\k{a}}kowski, Andrzej},
  title   = {Correction},
  journal = {The American Mathematical Monthly},
  volume  = {68},
  number  = {7},
  pages   = {650},
  year    = {1961}
}

@article{HunsuckerNebbStearns1973,
  author  = {Hunsucker, J. L. and Nebb, J. and Stearns, Jr., R. E.},
  title   = {Computational results concerning some equations involving $\sigma(n)$},
  journal = {The Mathematics Student},
  volume  = {41},
  pages   = {285--289},
  year    = {1973}
}

@article{MientkaVogt1970,
  author  = {Mientka, W. E. and Vogt, R. L.},
  title   = {Computational results relating to problems concerning $\sigma(n)$},
  journal = {Matemati\v{c}ki Vesnik},
  volume  = {7(22)},
  number  = {51},
  pages   = {35--36},
  year    = {1970}
}

@article{ErdosPomeranceSarkozy1987,
  author  = {Erd{\H{o}}s, Paul and Pomerance, Carl and S{\'a}rk{\H{o}}zy, Andr{\'a}s},
  title   = {On locally repeated values of certain arithmetic functions, {II}},
  journal = {Acta Mathematica Hungarica},
  volume  = {49},
  number  = {1--2},
  pages   = {251--259},
  year    = {1987}
}

@article{Yamada2017,
  author  = {Yamada, Tomohiro},
  title   = {On equations $\sigma(n)=\sigma(n+k)$ and $\varphi(n)=\varphi(n+k)$},
  journal = {Journal of Combinatorics and Number Theory},
  volume  = {9},
  number  = {1},
  pages   = {15--21},
  year    = {2017}
}

@article{Weingartner2011,
  author  = {Weingartner, Andreas},
  title   = {On the solutions of $\sigma(n)=\sigma(n+k)$},
  journal = {Journal of Integer Sequences},
  volume  = {14},
  number  = {5},
  pages   = {Article 11.5.5, 7 pp.},
  year    = {2011}
}

@article{PollackPomerance2016,
  author  = {Pollack, Paul and Pomerance, Carl},
  title   = {Some problems of Erd{\H{o}}s on the sum-of-divisors function},
  journal = {Transactions of the American Mathematical Society, Series B},
  volume  = {3},
  pages   = {1--26},
  year    = {2016},
  doi     = {10.1090/btran/10}
}

@article{Ford2022,
  author  = {Ford, Kevin},
  title   = {Solutions of $\phi(n)=\phi(n+k)$ and $\sigma(n)=\sigma(n+k)$},
  journal = {International Mathematics Research Notices},
  volume  = {2022},
  number  = {5},
  pages   = {3561--3570},
  year    = {2022},
  doi     = {10.1093/imrn/rnaa218}
}

@article{Mangerel2024,
  author  = {Mangerel, Alexander P.},
  title   = {On equal consecutive values of multiplicative functions},
  journal = {Discrete Analysis},
  volume  = {2024},
  pages   = {Paper No. 12, 20 pp.},
  year    = {2024},
  doi     = {10.19086/da.125450}
}

@article{SchinzelSierpinski1958,
  author  = {Schinzel, Andrzej and Sierpi{\'n}ski, Wac{\l}aw},
  title   = {Sur certaines hypoth{\`e}ses concernant les nombres premiers},
  journal = {Acta Arithmetica},
  volume  = {4},
  number  = {3},
  pages   = {185--208},
  year    = {1958},
  doi     = {10.4064/aa-4-3-185-208}
}

@article{SchinzelSierpinskiCorr1959,
  author  = {Schinzel, Andrzej and Sierpi{\'n}ski, Wac{\l}aw},
  title   = {Remarque concernant le travail de A. Schinzel et W. Sierpi{\'n}ski: ``Sur certaines hypoth{\`e}ses concernant les nombres premiers''},
  journal = {Acta Arithmetica},
  volume  = {5},
  number  = {2},
  pages   = {259},
  year    = {1959},
  doi     = {10.4064/aa-5-2-259}
}

@article{BatemanHorn1962,
  author  = {Bateman, Paul T. and Horn, Roger A.},
  title   = {A heuristic asymptotic formula concerning the distribution of prime numbers},
  journal = {Mathematics of Computation},
  volume  = {16},
  number  = {79},
  pages   = {363--367},
  year    = {1962},
  doi     = {10.1090/S0025-5718-1962-0148632-7}
}

@article{TorabiFatehizadeh2021,
  author  = {Torabi, Hamid and Fatehizadeh, Amirali},
  title   = {A generalization of the Erdos-Serpinski conjecture},
  journal = {Journal of New Researches in Mathematics},
  volume  = {6},
  number  = {28},
  pages   = {75--82},
  year    = {2021},
  note    = {[in Persian]}
}

@online{Fatehizadeh2026,
  author      = {Fatehizadeh, Amirali},
  title       = {A generalization of the {Erd{\H{o}}s-Sierpi{\'n}ski} conjecture},
  year        = {2026},
  eprint      = {2605.21524},
  eprinttype  = {arXiv},
  eprintclass = {math.NT},
  note        = {Preprint}
}

@online{Benito2007,
  author      = {Benito, Lourdes},
  title       = {Solutions of the problem of {Erd{\"o}s-Sierpi{\'n}ski}: $\sigma(n)=\sigma(n+1)$},
  year        = {2007},
  eprint      = {0707.2190},
  eprinttype  = {arXiv},
  eprintclass = {math.NT},
  note        = {Preprint}
}

@article{BaylessKinlaw2015,
  author  = {Bayless, Jonathan and Kinlaw, Paul},
  title   = {On repeated values of $\sigma$ and multiperfect numbers},
  journal = {Journal of Combinatorics and Number Theory},
  volume  = {7},
  number  = {3},
  pages   = {177--189},
  year    = {2015}
}

@online{OEIS2026,
  author = {{OEIS Foundation Inc.}},
  title  = {The On-Line Encyclopedia of Integer Sequences},
  year   = {2026},
  url    = {https://oeis.org}
}
\nocite{Fatehizadeh2026}
\end{document}